\newcommand{\fa}{\mathfrak{a}}
\newcommand{\fm}{\mathfrak{m}}
\newcommand{\af}{\text{af}}
\chardef\bslash=`\\ 
\newtheorem{theorem}{Theorem}[section]
\newtheorem{prop}[theorem]{Proposition}
\newtheorem{lem}[theorem]{Lemma}
\newtheorem{cor}[theorem]{Corollary}
\theoremstyle{definition}
\newtheorem{remark}[theorem]{Remark}
\newtheorem{example}[theorem]{Example}
\numberwithin{equation}{section}
\newtheorem*{maintheorem*}{Main Theorem}
\theoremstyle{definition}
\newtheorem{definition}{Definition}
\newcommand{\CO}{\mathcal{O}}
\newcommand{\OV}{\un{\textbf{O}}_V}
\newcommand{\SOV}{\un{\textbf{SO}}_V}
\newcommand{\ClS}{\text{Cl}_S}
\newcommand{\CS}{\CO_S}
\renewcommand{\sectionmark}[1]{}
\newcommand{\Br}{{\mathrm{Br}}}
\newcommand{\iy}{\infty}
\newcommand{\Pic}{\text{Pic~}}
\newcommand{\dl}{\delta}
\newcommand{\un}{\underline}
\newcommand{\BF}{\mathbb{F}}
\newcommand{\et}{\text{\'et}}
\begin{document}

\date{}


\baselineskip20pt
\setcounter{equation}{0}
\pagestyle{plain}
\pagenumbering{arabic}

\title{On the classification of quadratic forms over an integral domain of a global function field}

\author{Rony A. Bitan}
\thanks{This work was supported by a Chateaubriand Fellowship of the Embassy of France in Israel, 2016.}

\begin{abstract}
Let $C$ be a smooth projective curve defined over the finite field $\BF_q$ ($q$ is odd)
and let $K=\BF_q(C)$ be its function field. 
Any finite set $S$ of closed points of $C$ gives rise to an integral domain $\CS:=\BF_q[C-S]$ in $K$.  
We show that given an $\CS$-regular quadratic space $(V,q)$ of rank $n \geq 3$, 
the set of genera in the proper classification of quadratic $\CS$-spaces  
isomorphic to $(V,q)$ in the flat or \'etale topology, 
is in $1:1$ correspondence with ${_2}\Br(\CS)$, thus there are $2^{|S|-1}$ such.   
If $(V,q)$ is isotropic, then $\Pic(\CS)/2$ classifies the forms in the genus of $(V,q)$.   
For $n \geq 5$ this is true for all genera, hence the full classification is via the abelian group $H^2_\et(\CS,\un{\mu}_2)$.  
\end{abstract}

\maketitle
\markright{Classification of integral forms} 

\pagestyle{headings}

\section{Introduction}

Let $C$ be a projective algebraic curve defined over a finite field
$\mathbb{F}_{q}$ (with $q$ odd), assumed to be geometrically connected
and smooth. Let $K=\mathbb{F}_{q}(C)$ be its function field and let
$\Omega$ denote the set of all closed points of $C$. For any point
$\mathfrak{p}\in\Omega$ let $v_{\mathfrak{p}}$ be the induced discrete
valuation on $K$, $\hat{\mathcal{O}}_{\mathfrak{p}}$ the complete
discrete valuation ring with respect to $v_{\mathfrak{p}}$ and
$\hat{K}_{\mathfrak{p}}$ its fraction field. Any \emph{Hasse set} of
$K$, namely a non-empty finite set $S \subset\Omega$, gives rise to
an integral domain of $K$ called a \emph{Hasse domain}:
\begin{equation*}
\mathcal{O}_{S}:= \{x \in K: v_{\mathfrak{p}}(x) \geq0 \ \forall
\mathfrak{p}\notin S\}.
\end{equation*}
This is a Dedekind domain, regular and one dimensional. Schemes defined
over $\text{Spec} \,\mathcal{O}_{S}$ are denoted by an underline, being
omitted in the notation of their generic fibers.

As $2$ is invertible in $\mathcal{O}_{S}$, the $\mathcal{O}_{S}$-group
$\underline{\mu}_{2} := \text{Spec} \,\mathcal{O}_{S}[t]/(t^{2}-1)$ is
smooth, whence applying \'{e}tale cohomology to the Kummer sequence:
\begin{equation*}
1 \to\underline{\mu}_{2} \to\underline{\mathbb{G}}_{m}
\xrightarrow{x \mapsto x^{2}} \underline{\mathbb{G}}_{m} \to1
\end{equation*}
gives rise to the long exact sequence of abelian groups:
\begin{eqnarray*}
H^{1}_{\text{\'{e}t}}(\mathcal{O}_{S},\underline{\mathbb{G}}_{m})
&\xrightarrow{[ \mathfrak{L}] \to[\mathfrak{L} \otimes_{\mathcal{O}_{S}}
\mathfrak{L}]}&
H^{1}_{\text{\'{e}t}}(\mathcal{O}_{S},\underline{\mathbb{G}}_{m}) \to
H^{2}_{\text{\'{e}t}}(\mathcal{O}_{S},\underline{ \mu}_{2}) \to
H^{2}_{\text{\'{e}t}}(\mathcal{O}_{S},\underline{ \mathbb{G}}_{m})\\
&\xrightarrow{[A] \to[A \otimes_{\mathcal{O}_{S}} A]}& H^{2}_{
\text{\'{e}t}}(\mathcal{O}_{S},\underline{\mathbb{G}}_{m}).
\end{eqnarray*}
Identifying
$H^{1}_{\text{\'{e}t}}(\mathcal{O}_{S},\underline{\mathbb{G}}_{m})$
with $\text{Pic~}(\mathcal{O}_{S})$ by Shapiro's lemma (cf.
\cite[XXIV,\break Prop.~8.4]{SGA3}), and $H^{2}_{\text{\'{e}t}}(
\mathcal{O}_{S},\underline{\mathbb{G}}_{m})$ with the Brauer group
${\mathrm{Br}}(\mathcal{O}_{S})$, classifying Azumaya $\mathcal{O}
_{S}$-algebras (cf. \cite[\S2]{Mil}), we deduce the short exact
sequence:
%
\begin{equation}
\label{Kummermu2H1}
1 \to\text{Pic~}(\mathcal{O}_{S})/2 \xrightarrow{\partial} H^{2}
_{\text{\'{e}t}}(\mathcal{O}_{S},\underline{\mu}_{2}) \xrightarrow{i
_{*}^{2}} {_{2}}{\mathrm{Br}}(\mathcal{O}_{S}) \to1,
\end{equation}
in which the right non-trivial term is the $2$-torsion part in
${\mathrm{Br}}(\mathcal{O}_{S})$. We analyze some properties related to
this sequence in Section~\ref{Azumayasalgebras}, which will be used to
classify regular quadratic $\mathcal{O}_{S}$-spaces.

Let $(V,q)$ (not to be confused with $q=|\mathbb{F}_{q}|$) be a
quadratic $\mathcal{O}_{S}$-space of rank $n \geq3$, namely, $V$ is a
projective $\mathcal{O}_{S}$-module of rank $n$ and $q:V \to
\mathcal{O}_{S}$ is a $2$-order homogeneous $\mathcal{O}_{S}$-form.
Since $2$ is a unit, $q$ corresponds to the symmetric bilinear form
$B_{q}:V \times V \to\mathcal{O}_{S}$ such that:
\begin{equation*}
B_{q}(u,v) = q(u+v)-q(u)-q(v) \ \ \forall u,v \in V.
\end{equation*}
We assume it to be $\mathcal{O}_{S}$-\emph{regular}, namely, the induced
homomorphism $V \to V^{\vee}:= \text{Hom}(V,\mathcal{O}_{S})$ is an
isomorphism (\cite[I.~\S3, 3.2]{Knu}). 
Two quadratic $\mathcal{O}_{S}$-spaces $(V,q)$ and $(V',q')$ are \emph{isomorphic} over an
extension $R$ of $\mathcal{O}_{S}$, if there exists an
$R$-\emph{isometry} between them, namely, an $R$-isomorphism
$T: V' \otimes_{\mathcal{O}_{S}} R \cong V \otimes_{\mathcal{O}_{S}} R$
such that $q \circ T = q'$. The notation of the
$\mathcal{O}_{S}$-isomorphism class $[(V,q)]$ is sometimes, when no
ambiguity arises, shortened to $[q]$. 
The [proper] \emph{genus} of $(V,q)$ is the set of classes of all quadratic $\mathcal{O}_{S}$-spaces
that are [properly, i.e.,\vadjust{\goodbreak} with $\det=1$ isomorphisms] isomorphic to
$(V,q)$ over $K$ and over $\hat{\mathcal{O}}_{\mathfrak{p}}$ for any
prime $\mathfrak{p} \notin S$. 
This [proper] genus bijects as a pointed-set with
the \emph{class set} $[\text{Cl}_{S}(\underline{\text{\textbf{SO}}}_{V})]$,
$\text{Cl}_{S}(\underline{\text{\textbf{O}}}_{V})$.

The results generalize the ones in \cite{Bit}, in which $S$ is assumed
to contain only one (arbitrary) point $\infty\in\Omega$, thus giving
rise to an affine curve $C^{\text{af}}= C-\{\infty\}$, for which
${\mathrm{Br}}(\mathcal{O}_{\{\infty\}})=1$ (cf.
\cite[Lemma~3.3]{Bit}). The quadratic $\mathcal{O}_{\{\infty\}}$-spaces
that are locally properly isomorphic to $(V,q)$ for the flat or the
\'{e}tale topology belong all to the same genus, and are classified by
the abelian group $H^{2}_{\text{\'{e}t}}(\mathcal{O}_{\{\infty\}
},\underline{\mu}_{2}) \cong\text{Pic~}(\mathcal{O}_{\{\infty\}})/2$.
Here we show more generally for any finite set $S$ that, as
$\text{Cl}_{S}(\underline{\text{\textbf{SO}}}_{V})$ is the kernel of
what we call the \emph{relative} Witt-invariant
$H^{1}_{\text{\'{e}t}}(\mathcal{O}_{S},\underline{\text{\textbf{SO}}}_{V})
\xrightarrow{w_{V}} {_{2} {\mathrm{Br}}( \mathcal{O}_{S})}$, the latter
abelian group bijects to the set of $2^{|S|-1}$ proper genera of
$(V,q)$ (Proposition~\ref{Wittexactness}).

Another consequence of passing to $|S|>1$ is that $\mathcal{O}_{S}
^{\times}\neq\mathbb{F}_{q}^{\times}$, whence
$\mathcal{O}_{S}$-regularity imposed on $(V,q)$, no longer guarantees
its isotropy. Requiring $(V,q)$ to be isotropic, we show that
$\text{Pic~}(\mathcal{O}_{S})/2$ still classifies the quadratic spaces
in the genus $\text{Cl}_{S}(\underline{\text{\textbf{O}}}_{V})$ being
equal to proper genus in this case (Lemma~\ref{samegenus}), for any $S$
(Theorem~\ref{isotropicclassification}). In particular in case $n \geq 5$, 
in which all classes are isotropic, any proper genus in
$H^{1}_{\text{\'{e}t}}(\mathcal{O}_{S},\underline{\text{\textbf{SO}}}_{V})$ --
corresponding as aforementioned to an element of $_{2} {\mathrm{Br}}(
\mathcal{O}_{S})$ -- is isomorphic to $\text{Pic~}(\mathcal{O}_{S})/2$,
whence their disjoint union $H^{1}_{\text{\'{e}t}}(\mathcal
{O}_{S},\underline{\text{\textbf{SO}}}_{V})$ is isomorphic to the
abelian group $H^{2}_{\text{\'{e}t}}(\mathcal{O}_{S},\underline{\mu}_{2})$ (as for $|S|=1$
and $n \geq3$), fitting into the sequence \eqref{Kummermu2H1}
(Corollary~\ref{genera}).

In Section~\ref{hyperbolicplane}, we refer to the case in which $V$ is
split by a hyperbolic plane $H(L_{0})$, and provide an isomorphism
$\psi_{V}:\text{Pic~}(\mathcal{O}_{S})/2
\to\text{Cl}_{S}(\underline{\text{\textbf{O}}}_{V})$. In case $C$ is an
elliptic curve and $S=\{\infty \}$ where $\infty$ is
$\mathbb{F}_{q}$-rational, an algorithm, producing explicitly
representatives of classes in $H^{1}_{
\text{\'{e}t}}(\mathcal{O}_{S},\underline{\text{\textbf{SO}}}_{V})$, is given (\ref{Generatoralgorithm}).

\section{A classification of Azumaya algebras}\label{Azumayasalgebras}

A faithfully flat projective (right) $\mathcal{O}_{S}$-module $A$ is an
\emph{Azumaya} $\mathcal{O}_{S}$-algebra if the map
\begin{equation*}
A \otimes A^{\text{op}} \to\text{End}_{\mathcal{O}_{S}}(A): \ a
\otimes b^{\text{op}} \mapsto(x \mapsto axb)
\end{equation*}
is an isomorphism. It is central, separable and finitely generated as
an $\mathcal{O}_{S}$-module. Two Azumaya $\mathcal{O}_{S}$-algebras
$A,B$ are \emph{Brauer equivalent} if there exist faithfully projective
modules $P,Q$ such that:
\begin{equation*}
A \otimes\text{End}_{\mathcal{O}_{S}}(P) \cong B \otimes\text{End}
_{\mathcal{O}_{S}}(Q).
\end{equation*}
The tensor product induces the structure of an abelian group
${\mathrm{Br}}(\mathcal{O}_{S})$ on the equivalence classes, in which
the neutral element is $[\mathcal{O}_{S}]$ and the inverse of $[A]$ is
$[A^{\text{op}}]$ (cf. \cite[III.~5.1 and 5.3]{Knu}).

Let $V \cong\mathcal{O}_{S}^{2}$. Consider the following exact and
commutative diagram of smooth $\mathcal{O}_{S}$-groups:
%
\begin{align}
\label{diagram}
\xymatrix{
& 1 \ar[d] & 1 \ar[d] \\
1 \ar[r] & \underline{\mu}_2 \ar[r] \ar[d] & \underline{\text{\textbf{SL}}}(V) \ar[r] \ar[d]
& \underline{\text{\textbf{PGL}}}(V) \ar[r] \ar@{=}[d] & 1 \\
1 \ar[r] & \underline{\mathbb{G}}_m \ar[r] \ar[d]^{x \mapsto x^2} & \underline{\textbf
{GL}}(V) \ar[r]^{\pi} \ar[d]^{\det} & \underline{\text{\textbf{PGL}}}(V) \ar[r] & 1 \\
& \underline{\mathbb{G}}_m \ar[r] \ar[d] & \underline{\mathbb{G}}_m \ar[d] \\
& 1 & 1.
}
\end{align}
The generalization of the Skolem--Noether Theorem to unital commutative
rings, applied to the Azumaya $\mathcal{O}_{S}$-algebra $A=\text{End}
_{\mathcal{O}_{S}}(V)$, is the exact sequence of groups (see
\cite[III.~5.2.1]{Knu}):
\begin{equation*}
1 \to\mathcal{O}_{S}^{\times}\to A^{\times}\to\text{Aut}_{
\mathcal{O}_{S}}(A) \to\text{Pic~}(\mathcal{O}_{S}).
\end{equation*}
This sequence induces by sheafification a short exact sequence of
sheaves in the \'{e}tale topology (cf. \cite[p.~145]{Knu}):
\begin{equation*}
1 \to\underline{\mathbb{G}}_{m} \to\underline{\text{\textbf{GL}}}(V)
\to\underline{\text{Aut}}(\text{End}_{\mathcal{O}_{S}}(V)) \to1
\end{equation*}
from which we see that: $\underline{\text{\textbf{PGL}}}(V) =
\underline{\text{Aut}}(\text{End}_{\mathcal{O}_{S}}(V))$. In this interpretation,
\'{e}tale cohomology applied to the diagram \eqref{diagram}, plus the
sequence \eqref{Kummermu2H1}, give rise to the exact and commutative diagram:
%
\begin{equation}
\label{PGLV(V)}
\xymatrix{
& & \text{Pic~}(\mathcal{O}_S) \ar[d]^\partial \\
H^1_\text{\'et}(\mathcal{O}_S,\underline{\text{\textbf{SL}}}(V)) \ar[r] \ar[d] & H^1_\text{\'et}(\mathcal{O}_S,\underline{\textbf
{PGL}}(V)) \ar[r]^{\partial^1} \ar@{=}[d] & H^2_\text{\'et}(\mathcal{O}_S,\underline{\mu}_2) \ar[d] \\
H^1_\text{\'et}(\mathcal{O}_S,\underline{\text{Aut}}(V)) \ar[r]^{\pi_*} \ar[d]^{\Delta= \det_*} &
H^1_\text{\'et}(\mathcal{O}_S,\underline{\text{Aut}}(\text{End}_{\mathcal{O}_S}(V))) \ar[r] & {\mathrm{Br}}(\mathcal{O}_S) \\
\text{Pic~}(\mathcal{O}_S)
}
\end{equation}
in which $H^{1}_{\text{\'{e}t}}(\mathcal{O}_{S},
\underline{\text{Aut}}(V))$ classifies twisted forms of $V$ in the
\'{e}tale topology, while its image in $H^{1}_{\text{\'{e}t}}(
\mathcal{O}_{S},\underline{\text{Aut}}(\text{End}_{\mathcal{O}_{S}}(V))$
classifies these $\mathcal{O}_{S}$-modules up to scaling by an
$\mathcal{O}_{S}$-line, i.e., by an invertible $\mathcal{O}_{S}$-module.
Explicitly, $\pi_{*}: [P] \mapsto[\text{End}_{\mathcal{O}_{S}}(P)]$
(cf. \cite[III.~5.2.4]{Knu}).

\begin{cor}
\label{PictoH2}
In diagram \eqref{PGLV(V)}: $\partial([L]) = \partial^{1}([
\text{End}_{\mathcal{O}_{S}}(\mathcal{O}_{S}\oplus L)])$.
\end{cor}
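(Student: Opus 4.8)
\emph{Proof plan.} The plan is to recognize the identity as the commutativity of a single square sitting on top of the data already assembled in \eqref{PGLV(V)} and \eqref{diagram}. First I would note that $\CO_S \oplus L$ is a projective $\CO_S$-module of rank $2$, hence an étale-locally trivial twisted form of $V \cong \CO_S^2$, so it represents a class $\xi_L \in H^1_\et(\CO_S, \un{\Aut}(V))$. Two of its images are immediate: under the vertical map $\Delta = \det_*$ of \eqref{PGLV(V)} it goes to $[\wedge^2(\CO_S \oplus L)] = [L] \in \Pic(\CO_S)$, and under $\pi_*$, by the formula $\pi_* : [P] \mapsto [\End_{\CO_S}(P)]$ recalled just before the statement, it goes to $[\End_{\CO_S}(\CO_S \oplus L)]$. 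Hence the asserted equality $\partial([L]) = \partial^1([\End_{\CO_S}(\CO_S \oplus L)])$ is precisely $\partial(\det_* \xi_L) = \partial^1(\pi_* \xi_L)$, and it is enough to prove that the square with top edge $\pi_* : H^1_\et(\CO_S,\un{\Aut}(V)) \to H^1_\et(\CO_S,\un{\textbf{PGL}}(V))$, left edge $\det_*$, bottom edge the Kummer boundary $\partial : \Pic(\CO_S) \to H^2_\et(\CO_S,\un{\mu}_2)$ and right edge $\partial^1$ (the boundary of the top row $1 \to \un{\mu}_2 \to \un{\textbf{SL}}(V) \to \un{\textbf{PGL}}(V) \to 1$ of \eqref{diagram}) commutes on all of $H^1_\et(\CO_S,\un{\Aut}(V))$.

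To prove that commutativity I would compute with Čech cocycles. Represent an arbitrary class by a $1$-cocycle $(g_{ij})$ valued in $\un{\Aut}(V) = \un{\textbf{GL}}(V)$ over an étale covering $\{U_i\}$; after refining, each $\pi(g_{ij})$ lifts to an $s_{ij} \in \un{\textbf{SL}}(V)(U_{ij})$, since $\un{\textbf{SL}}(V) \to \un{\textbf{PGL}}(V)$ is smooth and surjective (all the groups in sight are smooth because $2 \in \CO_S^\times$). Set $c_{ij} := g_{ij} s_{ij}^{-1}$; as $\pi(g_{ij}) = \pi(s_{ij})$ this lies in the central subgroup $\un{\BG}_m = \ker \pi$, and therefore $\det g_{ij} = c_{ij}^2 \det s_{ij} = c_{ij}^2$. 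Consequently $(\det g_{ij}) = (c_{ij}^2)$ is a $\un{\BG}_m$-cocycle representing $\det_*[(g_{ij})]$ of which $(c_{ij})$ is an explicit square root, so by the description of the Kummer boundary one gets $\partial(\det_*[(g_{ij})]) = [(c_{ij} c_{jk} c_{ik}^{-1})]$; while $(s_{ij})$ lifts the $\un{\textbf{PGL}}(V)$-cocycle $(\pi(g_{ij})) = \pi_*[(g_{ij})]$ through $\un{\textbf{SL}}(V)$, so $\partial^1(\pi_*[(g_{ij})]) = [(s_{ij} s_{jk} s_{ik}^{-1})]$. Using that the $c_{ij}$ are central and that the cocycle relation gives $g_{ij} g_{jk} g_{ik}^{-1} = 1$, one finds $s_{ij} s_{jk} s_{ik}^{-1} = c_{ik} c_{ij}^{-1} c_{jk}^{-1} = (c_{ij} c_{jk} c_{ik}^{-1})^{-1}$, and since $\un{\mu}_2$ is killed by $2$ this is the same $2$-cocycle as $(c_{ij} c_{jk} c_{ik}^{-1})$. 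Hence $\partial^1 \circ \pi_* = \partial \circ \det_*$, and specializing to $\xi_L$ together with the first paragraph gives the corollary.

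The step I expect to be the only real nuisance is the bookkeeping of the non-abelian (central-extension) Čech boundary maps in that square — one must make sure the boundary of $1 \to \un{\mu}_2 \to \un{\textbf{SL}}(V) \to \un{\textbf{PGL}}(V) \to 1$ and the Kummer boundary are normalized compatibly, and notice that the apparent sign $(c_{ij} c_{jk} c_{ik}^{-1})^{\pm 1}$ is harmless because $\un{\mu}_2$ is $2$-torsion. Conceptually there is no mystery: $\partial^1(\pi_* \xi_L)$ is the obstruction to lifting $[\End_{\CO_S}(\CO_S \oplus L)]$ from $\un{\textbf{PGL}}(V)$ to $\un{\textbf{SL}}(V)$, i.e.\ to replacing $\CO_S \oplus L$ by a rank-$2$ bundle with the same endomorphism algebra and trivial determinant, and this is visibly the obstruction to $\wedge^2(\CO_S \oplus L) = L$ being a square in $\Pic(\CO_S)$, namely $\partial([L])$. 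If one prefers to avoid cocycles, the same square can be extracted from functoriality of connecting maps applied to the morphism of short exact sequences from the Kummer sequence to the top row of \eqref{diagram} given by $\un{\mu}_2 = \un{\mu}_2$, $\lambda \mapsto \diag(\lambda,\lambda^{-1}) : \un{\BG}_m \to \un{\textbf{SL}}(V)$ and $\mu \mapsto \overline{\diag(\mu,1)} : \un{\BG}_m \to \un{\textbf{PGL}}(V)$, after checking that $\overline{\diag(\cdot,1)}_*[L]$ and $\pi_*[\CO_S \oplus L] = [\End_{\CO_S}(\CO_S \oplus L)]$ agree in $H^1_\et(\CO_S,\un{\textbf{PGL}}(V))$.
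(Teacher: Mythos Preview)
Your proposal is correct and follows essentially the same approach as the paper: both reduce the identity to the commutativity $\partial \circ \det_* = \partial^1 \circ \pi_*$ on $H^1_\et(\CO_S,\un{\Aut}(V))$ and then evaluate at the class $\xi_L = [\CO_S \oplus L]$, using $\det_*\xi_L = [L]$ and $\pi_*\xi_L = [\End_{\CO_S}(\CO_S \oplus L)]$. The paper simply asserts this commutativity as a diagram chase out of \eqref{PGLV(V)} (packaged in the reduced diagram \eqref{reduceddiagram}), whereas you supply an explicit \v{C}ech-cocycle verification (and an alternative via functoriality of boundary maps), so your argument is a slightly more detailed execution of the same idea.
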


\begin{proof}
By chasing diagram \eqref{PGLV(V)} we may deduce the following reduced one:
%
\begin{equation}
\label{reduceddiagram}
\xymatrix{
&& \text{Pic~}(\mathcal{O}_S)/2 \ar@{^{(}->}[d]^\partial \\
H^1_\text{\'et}(\mathcal{O}_S,\underline{\text{Aut}}(V)) \ar[rr]^{\partial^1 \circ\pi_*} \ar[urr]^{\Delta}
\ar[drr]^{0} && H^2_\text{\'et}(\mathcal{O}_S,\underline{\mu}_2) \ar@{->>}[d] \\
&& {_2 {\mathrm{Br}}(\mathcal{O}_S)}
}
\end{equation}
which shows that: $ \partial([L]) = \partial(\Delta([\mathcal{O}
_{S}\oplus L])) = \partial^{1}(\pi_{*}([\mathcal{O}_{S}\oplus L])) =
\partial^{1}([\text{End}_{\mathcal{O}_{S}}(\mathcal{O}_{S}\oplus L)])$.
\end{proof}

\begin{lem}
\label{no.genera}
$|{_{2} {\mathrm{Br}}(\mathcal{O}_{S})}| = 2^{|S|-1}$.
\end{lem}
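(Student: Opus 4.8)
The plan is to compute the full group $\Br(\CS)$ by comparing it, through ramification, with $\Br(K)$, and then to read off its $2$-torsion.

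First I would use that $\CS$ is a one‑dimensional regular integral (indeed Dedekind) ring with fraction field $K$: restriction to the generic point gives an injection $\Br(\CS)\hookrightarrow\Br(K)$ (valid since $\CS$ is regular), and purity for the Brauer group over a Dedekind domain identifies its image with the classes unramified at every closed point of $\Sp\CS$, i.e. at every $\fp\in\Om\setminus S$. Since each residue field $\kappa(\fp)$ is finite, $\Br(\hat{\CO}_\fp)\cong\Br(\kappa(\fp))=0$, so the local invariant $\mathrm{inv}_\fp\colon\Br(\hat{K}_\fp)\xrightarrow{\ \sim\ }\Q/\Z$ is injective and an $\alpha\in\Br(K)$ is unramified at $\fp$ exactly when $\mathrm{inv}_\fp(\alpha)=0$. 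Hence
\[
\Br(\CS)\ \cong\ \{\,\alpha\in\Br(K)\ :\ \mathrm{inv}_\fp(\alpha)=0\ \text{ for every }\ \fp\notin S\,\}.
\]

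Next I would feed this into the fundamental exact sequence of class field theory for the global function field $K$,
\[
0\ \longrightarrow\ \Br(K)\ \longrightarrow\ \bigoplus_{\fp\in\Om}\Br(\hat{K}_\fp)\ \xrightarrow{\ \sum_\fp \mathrm{inv}_\fp\ }\ \Q/\Z\ \longrightarrow\ 0 ,
\]
which realizes $\Br(K)$ as $\ker\bigl(\bigoplus_{\fp\in\Om}\Q/\Z\xrightarrow{\,\sum\,}\Q/\Z\bigr)$. Intersecting with the condition above retains exactly the tuples of invariants supported on $S$, so
\[
\Br(\CS)\ \cong\ \ker\Bigl(\bigoplus_{\fp\in S}\Q/\Z\ \xrightarrow{\ \sum\ }\ \Q/\Z\Bigr)\ \cong\ \bigoplus_{\fp\in S\setminus\{\fp_0\}}\Q/\Z
\]
for any fixed $\fp_0\in S$ (here $S\neq\varnothing$ is used).

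Finally, passing to $2$‑torsion: the $2$-torsion subgroup of $\bigoplus_{\fp\in S}\Q/\Z$ equals $\bigoplus_{\fp\in S}\tfrac12\Z/\Z\cong(\Z/2)^{|S|}$, and the restriction of the sum map, landing in $\tfrac12\Z/\Z\cong\Z/2$, is surjective; hence ${_2}\Br(\CS)\cong\ker\bigl((\Z/2)^{|S|}\xrightarrow{\,\sum\,}\Z/2\bigr)$ is a group of order $2^{|S|-1}$, as claimed. The single step that needs genuine care is the first one — establishing, with the correct normalizations, that ``unramified outside $S$'' is equivalent to a condition purely on the local invariants, together with the injectivity $\Br(\CS)\hookrightarrow\Br(K)$; everything after that is formal manipulation inside the reciprocity sequence.
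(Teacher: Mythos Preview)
Your proof is correct and follows essentially the same route as the paper: identify $\Br(\CS)$ inside $\Br(K)$ as the classes unramified outside $S$ (the paper cites Grothendieck's exact sequence \eqref{Grothendieck} for this), then intersect with the class field theory reciprocity sequence \eqref{sumHassesinvariants} to obtain $\Br(\CS)\cong\ker\bigl[\bigoplus_{\fp\in S}\Q/\Z\xrightarrow{\sum}\Q/\Z\bigr]$ and read off the $2$-torsion. The only cosmetic difference is that the paper phrases the ramification condition via the residue maps $r_\fp$ rather than the local invariants $\mathrm{inv}_\fp$, but these agree up to normalization and play the same role.
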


\begin{proof}
Let $r_{\mathfrak{p}}: {\mathrm{Br}}(K) \to H^{1}(k_{\mathfrak{p}},
\mathbb{Q}/\mathbb{Z}) \cong\mathbb{Q}/\mathbb{Z}$ be the residue map
at a prime $\mathfrak{p}$. The ramification map $a:= \oplus_{
\mathfrak{p}}r_{\mathfrak{p}}$ yields the exact sequence from Class
Field Theory (see \cite[Theorem~6.5.1]{GS}):
%
\begin{equation}
\label{sumHassesinvariants}
1 \to{\mathrm{Br}}(K) \xrightarrow{a} \bigoplus_{
\mathfrak{p}}\mathbb{Q}/ \mathbb{Z}\xrightarrow{\sum_{\mathfrak{p}}
\text{Cor}_{\mathfrak{p}}} \mathbb{Q}/\mathbb{Z}\to1
\end{equation}
in which the corestriction map $\text{Cor}_{\mathfrak{p}}$ for any
$\mathfrak{p}$ is an isomorphism induced by the Hasse-invariant
${\mathrm{Br}}(\hat{K}_{\mathfrak{p}}) \cong\mathbb{Q}/\mathbb{Z}$ (cf.
\cite[Proposition~6.3.9]{GS}). On the other hand, as all residue fields
of $K$ are finite, thus perfect, and $\mathcal{O}_{S}$ is a
one-dimensional regular scheme, it admits due to Grothendieck the
following exact sequence (see \cite[Proposition~2.1]{Gro} and
\cite[Example~2.22, case (a)]{Mil}):
%
\begin{equation}
\label{Grothendieck}
1 \to{\mathrm{Br}}(\mathcal{O}_{S}) \to{\mathrm{Br}}(K) \xrightarrow{
\oplus_{\mathfrak{p}\notin S} r_{\mathfrak{p}}} \bigoplus
_{\mathfrak{p}\notin S} \mathbb{Q}/\mathbb{Z},
\end{equation}
which means that ${\mathrm{Br}}(\mathcal{O}_{S})$ is the subgroup of
${\mathrm{Br}}(K)$ of classes that vanish under $r_{\mathfrak{p}}$ at
any $\mathfrak{p}\notin S$. Thus omitting these $r_{\mathfrak{p}},
\mathfrak{p}\notin S$ in the sequence (\ref{sumHassesinvariants}),
results in ${\mathrm{Br}}(\mathcal{O}_{S}) = \ker\left[ \bigoplus_{
\mathfrak{p}\in S} \mathbb{Q}/\mathbb{Z}\xrightarrow{
\sum_{\mathfrak{p}\in S}\text{Cor}_{\mathfrak{p}}} \mathbb{Q}/
\mathbb{Z}\right] $, whence the cardinality of its $2$-torsion part is
$2^{|S|-1}$.
\end{proof}

\begin{lem}
\label{H1=1sc}
Let $\underline{G}$ be an affine, flat, connected and smooth
$\mathcal{O}_{S}$-group. Suppose that its generic fiber $G$ is almost
simple, simply connected and $\hat{K}_{\mathfrak{p}}$-isotropic for any
$\mathfrak{p}\in S$. Then $H^{1}_{\text{\'{e}t}}(\mathcal{O}_{S},
\underline{G})=0$.
\end{lem}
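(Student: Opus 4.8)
The plan is to combine the local-to-global principle for simply connected groups over function fields with Harder's vanishing theorem. First I would recall the exact sequence relating $H^1_{\text{\'et}}(\mathcal{O}_S,\underline G)$ to the local cohomology at the primes of $S$ and to $H^1(K,G)$. Concretely, since $\underline G$ is smooth and affine over the Dedekind scheme $\text{Spec}\,\mathcal{O}_S$, a torsor under $\underline G$ that is generically trivial and trivial over each $\hat{\mathcal{O}}_{\mathfrak p}$ for $\mathfrak p \notin S$ is itself trivial; more precisely one has an exact sequence of pointed sets
\begin{equation*}
H^1_{\text{\'et}}(\mathcal{O}_S,\underline G) \to H^1(K,G) \times \prod_{\mathfrak p \notin S} H^1(\hat{\mathcal{O}}_{\mathfrak p},\underline G) \to \prod_{\mathfrak p \notin S} H^1(\hat K_{\mathfrak p},G),
\end{equation*}
so that a class in $H^1_{\text{\'et}}(\mathcal{O}_S,\underline G)$ is detected by its image in $H^1(K,G)$ together with the local classes at $\mathfrak p \notin S$.

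Next I would invoke the Hasse principle for simply connected semisimple groups over the global field $K$ (Harder, Kneser; the function-field case is due to Harder): the map $H^1(K,G) \to \prod_{\mathfrak p \in \Omega} H^1(\hat K_{\mathfrak p},G)$ is injective, in fact bijective onto the set of families that are almost everywhere trivial. Combined with the smoothness of $\underline G$ over $\mathcal{O}_S$ (which forces the classes at $\mathfrak p \notin S$ to come from $H^1(\hat{\mathcal{O}}_{\mathfrak p},\underline G)$, a set that is itself trivial by Lang's theorem and Hensel, as the special fiber is a connected smooth group over a finite field), this reduces the triviality of a given class in $H^1_{\text{\'et}}(\mathcal{O}_S,\underline G)$ to the vanishing of its images in $H^1(\hat K_{\mathfrak p},G)$ for $\mathfrak p \in S$.

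Finally, for each $\mathfrak p \in S$ the hypothesis says $G$ is almost simple, simply connected and $\hat K_{\mathfrak p}$-isotropic over the nonarchimedean local field $\hat K_{\mathfrak p}$; by a theorem of Kneser (Bruhat--Tits) the Galois cohomology $H^1(\hat K_{\mathfrak p},G)$ vanishes for such groups. Hence all local obstructions vanish, every class in $H^1_{\text{\'et}}(\mathcal{O}_S,\underline G)$ is trivial, and $H^1_{\text{\'et}}(\mathcal{O}_S,\underline G)=0$. The main obstacle, and the point requiring the most care, is justifying the first step cleanly — i.e., that an $\underline G$-torsor over $\mathcal{O}_S$ is determined by its generic fiber together with the (automatically trivial) completions at $\mathfrak p \notin S$, which rests on the properness-type gluing for torsors over a Dedekind scheme (Nisnevich's theorem, or the Grothendieck--Serre / patching argument), plus verifying that the isotropy hypothesis at $\mathfrak p \in S$ is exactly what is needed to kill $H^1$ at those finitely many bad primes. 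Once these local inputs are in place, the global Hasse principle finishes the argument with no further computation.
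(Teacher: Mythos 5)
There is a genuine gap at the very first step, and it is exactly where the real content of the lemma lies. You assert that a $\underline G$-torsor over $\mathcal{O}_S$ which is generically trivial and trivial over each $\hat{\mathcal{O}}_{\mathfrak p}$, $\mathfrak p\notin S$, is itself trivial, citing ``properness-type gluing'' or Nisnevich/Grothendieck--Serre. This is false for a general smooth affine group scheme: the kernel of the localization map
\begin{equation*}
H^1_{\text{\'et}}(\mathcal{O}_S,\underline G)\ \longrightarrow\ H^1(K,G)\times\prod_{\mathfrak p\notin S}H^1_{\text{\'et}}(\hat{\mathcal{O}}_{\mathfrak p},\underline G)
\end{equation*}
is precisely the class set $\text{Cl}_S(\underline G)=\underline G(\mathbb{A}_S)\backslash\underline G(\mathbb{A})/G(K)$ (this is the content of Nisnevich's exact sequence, which the paper records as sequence (4.1)); it is a nontrivial double-coset set in general, e.g.\ it equals $\text{Pic}(\mathcal{O}_S)$ for $\underline{\mathbb{G}}_m$ and the proper genus for $\underline{\textbf{SO}}_V$. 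Grothendieck--Serre is a statement over a regular \emph{local} ring and does not give injectivity over the global ring $\mathcal{O}_S$. What kills this kernel here is the \emph{strong approximation theorem} (Kneser, Prasad): for $G$ almost simple and simply connected, $G(K)\cdot\underline G(\mathbb{A}_S)$ is dense, hence equal to $\underline G(\mathbb{A})$, provided $\prod_{\mathfrak p\in S}G(\hat K_{\mathfrak p})$ is noncompact --- and this is exactly where the hypothesis of $\hat K_{\mathfrak p}$-isotropy for $\mathfrak p\in S$ enters. Your proposal never invokes strong approximation, which is the engine of the paper's (one-line) proof.

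Relatedly, you misplace the role of the isotropy hypothesis: you use it to argue $H^1(\hat K_{\mathfrak p},G)=1$ for $\mathfrak p\in S$, but by Bruhat--Tits that vanishing holds for \emph{any} simply connected semisimple group over a nonarchimedean local field, isotropic or not, so nothing in your argument actually uses the hypothesis in an essential way --- a sign that the anisotropic case would ``go through'' for you, whereas the lemma genuinely fails there (the class set of an anisotropic group can be large). The remaining ingredients you list --- Harder's theorem giving $H^1(K,G)=1$ (Satz A already gives vanishing outright, so the Hasse-principle detour is unnecessary) and Lang plus Hensel giving $H^1_{\text{\'et}}(\hat{\mathcal{O}}_{\mathfrak p},\underline G)=1$ --- are correct and are indeed part of the intended argument; they reduce $H^1_{\text{\'et}}(\mathcal{O}_S,\underline G)$ to $\text{Cl}_S(\underline G)$, after which strong approximation must finish the job.
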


\begin{proof}
The proof, basically relying on the strong approximation property
related to $\underline{G}$, is similar to that of Lemma~3.2 in \cite{Bit},
replacing $\{\infty\}$ by $S$.
\end{proof}

\section{Standard and relative invariants}\label{forms}

Let $\underline{\text{\textbf{O}}}_{V}$ be the \emph{orthogonal group} of
$(V,q)$ defined over $\text{Spec} \,\mathcal{O}_{S}$, namely, the
functor assigning to any $\mathcal{O}_{S}$-algebra $R$ the group of
self-isometries of $q$ over $R$:
\begin{equation*}
\underline{\text{\textbf{O}}}_{V}(R) = \{ A \in\underline{\text{\textbf{GL}}}_{n}(R):
q \circ A = q \}.
\end{equation*}
Since $2 \in\mathcal{O}_{S}^{\times}$ and $q$ is regular,
$\underline{\text{\textbf{O}}}_{V}$ is smooth as well as its connected
component, namely, the \emph{special orthogonal group}
$\underline{\text{\textbf{SO}}}_{V}:=
\ker[\underline{\text{\textbf{O}}}_{V}\xrightarrow{ \det}
\underline{\mu}_{2}]$ (see Definition~1.6, Theorem~1.7 and
Corollary~2.5 in \cite{Con}). Thus the pointed set $H^{1}_{
\text{fl}}(\mathcal{O}_{S},\underline{\text{\textbf{SO}}}_{V})$ --
properly (i.e., with $\det=1$ isomorphisms) classifying
$\mathcal{O}_{S}$-forms that are locally everywhere isomorphic to $q$
in the flat topology -- coincides with the classification
$H^{1}_{\text{\'{e}t}}(\mathcal{O}
_{S},\underline{\text{\textbf{SO}}}_{V})$ for the \'{e}tale topology
(see \cite[VIII~Corollaire~2.3]{SGA4}).

Let $\text{\textbf{C}}(V) :=T(V)/(v \otimes v -q(v)\cdot1:v \in V)$ be the
\emph{Clifford algebra} associated to $(V,q)$ (see \cite[IV]{Knu}). The
linear map $v \mapsto-v$ on $V$ preserves $q$, thus extends to an
algebra automorphism $\alpha: \text{\textbf{C}}(V) \to\text{\textbf{C}}(V)$. As it
is an involution, the graded algebra $\text{\textbf{C}}(V)$ is decomposed into
positive and negative eigenspaces: $\text{\textbf{C}}_{0}(V) \oplus
\text{\textbf{C}}_{1}(V)$ where $\text{\textbf{C}}_{i}(V) = \{ x \in\text{\textbf{C}}(V):
\alpha(x) = (-1)^{i} x \}$ for $i=0,1$. Since $(V,q)$ is projective and
$\mathcal{O}_{S}$-regular, $\text{\textbf{C}}(V)$ is Azumaya over $
\mathcal{O}_{S}$ (cf. \cite[Theorem, p.~166]{Bas}).

The \emph{Witt-invariant} of $(V,q)$ is:
\begin{equation*}
w(q) = \left\{
\begin{array}{c@{\quad} c}
[\text{\textbf{C}}(V)] \in{\mathrm{Br}}(\mathcal{O}_{S}) & \ n \
\text{is even}
\\
\nonumber
[\text{\textbf{C}}_{0}(V)] \in{\mathrm{Br}}(\mathcal{O}_{S}) & \ n \ \text{is
odd}.
\end{array}
\right.
\end{equation*}
As $\text{\textbf{C}}(V)$ and $\text{\textbf{C}}_{0}(V)$ are algebras with involution,
$w(q)$ lies in ${_{2} {\mathrm{Br}}(\mathcal{O}_{S})}$
(\cite[IV.~8]{Knu}).

The \emph{Clifford group} associated to $(V,q)$ is
\begin{equation*}
\text{\textbf{CL}}(V) := \{ u \in\text{\textbf{C}}(V)^{\times}\ : \ \alpha(u)vu
^{-1} \in V \ \forall v \in V \}.
\end{equation*}
The group $\underline{\text{\textbf{Pin}}}_{V}(\mathcal{O}_{S}):=\ker[
\text{\textbf{CL}}(V) \xrightarrow{N} \mathcal{O}_{S}^{\times}]$ where
$N:v \mapsto v\alpha(v)$, admits an underlying $\mathcal{O}_{S}$-group
scheme, called the \emph{Pinor group} denoted by
$\underline{\text{\textbf{Pin}}}_{V}$. It is a double covering of $
\underline{\text{\textbf{O}}}_{V}$ and its center $\underline{\mu}_{2}$
is smooth. So applying \'{e}tale cohomology to the \emph{Pinor exact
sequence} of smooth $\mathcal{O}_{S}$-groups:
%
\begin{equation}
\label{Pinor}
1 \to\underline{\mu}_{2} \to\underline{\text{\textbf{Pin}}}_{V} \to
\underline{\text{\textbf{O}}}_{V}\to1
\end{equation}
gives rise to the coboundary map of pointed-sets
%
\begin{equation}
\label{H1Pinor} \delta_{V}:
H^{1}_{\text{\'{e}t}}(\mathcal{O}_{S},\underline{\text{\textbf{O}}}_{V})
\to H^{2}_{\text{\'{e}t}}(\mathcal{O}_{S},\underline{ \mu}_{2}).
\end{equation}

Let $\underline{\text{\textbf{O}}}_{2n}$ and $\underline{\text{\textbf{O}}}_{2n+1}$
be the orthogonal groups of the hyperbolic spaces $H(\mathcal{O}_{S}
^{n})$ and $H(\mathcal{O}_{S}^{n}) \bot\langle1 \rangle$,
respectively, equipped with the \emph{standard split form} which we
denote by $q_{n}$ (see \cite[Definition~1.1]{Con}). The pointed set
$H^{1}_{\text{\'{e}t}}(\mathcal{O}_{S},\underline{\text{\textbf{O}}}_{n})$
classifies regular quadratic $\mathcal{O}_{S}$-modules of rank $n$
(\cite[IV.~5.3.1]{Knu}). It is identified with the pointed set
$H^{1}_{\text{\'{e}t}}(\mathcal{O}_{S},\underline{\text{\textbf{O}}}_{V})$
simply obtained by changing the base point to $(V,q)$ (cf.
\cite[IV, Prop.~8.2]{Knu}). We denote this identification by
$\theta$. One has the following commutative diagram of pointed sets
(cf. \cite[IV, Prop.~4.3.4]{Gir}):
%
\begin{equation}
\label{relativew}
\xymatrix{
H^1_\text{\'et}(\mathcal{O}_S,\underline{\text{\textbf{O}}}_{n}) \ar[r]^{\theta}_{\cong} \ar[d]^{\delta} &
H^1_\text{\'et}(\mathcal{O}_S,\underline{\text{\textbf{O}}}_{V}) \ar[d]_{\delta_V} \\
H^2_\text{\'et}(\mathcal{O}_S,\underline{\mu}_2) \ar[r]^{r_V}_{\cong} & H^2_\text{\'et}(\mathcal{O}_S,\underline{\mu}_2)
}
\end{equation}
in which $\delta:= \delta_{q_{n}}$ and $r_{V}(x)=x-\delta([q])$.

\begin{definition}
\label{relativeWitt-invariant}
We call the composition of maps of pointed sets:
\begin{equation*}
w_{V}: H^{1}_{\text{\'{e}t}}(\mathcal{O}_{S},\underline{\text{\textbf{O}}}
_{V}) \xrightarrow{\delta_{V}} H^{2}_{\text{\'{e}t}}(\mathcal{O}
_{S},\underline{\mu}_{2}) \xrightarrow{i_{*}^{2}} {_{2}}{\mathrm{Br}}(
\mathcal{O}_{S})
\end{equation*}
(see sequence \eqref{Kummermu2H1} for $i_{*}^{2}$) the \emph{relative
Witt-invariant}. It is a ``shift'' of the Witt-invariant $w=i_{*}^{2}
\circ\delta$, such that the base-point $[(V,q)]$ is mapped to
$[0] \in{\mathrm{Br}}(\mathcal{O}_{S})$.
\end{definition}

\begin{remark}
\label{Stiefel-Whitney}
The $\delta$-image of a class represented by $(V',q')$, being a regular
$\mathcal{O}_{S}$-module, is its \emph{second Stiefel--Whitney class},
denoted $w_{2}(q')$ (cf.
\cite[Definition~1.6 and Corollary~1.19]{EKV}).
\end{remark}

The connected component $\underline{\text{\textbf{Spin}}}_{V}$ of
$\underline{\text{\textbf{Pin}}}_{V}$ is smooth, and it is the
universal covering of $\underline{\text{\textbf{SO}}}_{V}$:
%
\begin{equation}
\label{Spinshortsequence}
1 \to\underline{\mu}_{2} \to\underline{\text{\textbf{Spin}}}_{V} \stackrel{
\pi}{\rightarrow} \underline{\text{\textbf{SO}}}_{V}\to1.
\end{equation}
Then \'{e}tale cohomology gives rise to the exact sequence of pointed
sets:
%
\begin{equation}
\label{H1Spin}
H^{1}_{\text{\'{e}t}}(\mathcal{O}_{S},\underline{\text{\textbf{Spin}}}_{V})
\to
H^{1}_{\text{\'{e}t}}(\mathcal{O}_{S},\underline{\text{\textbf{SO}}}_{V})
\xrightarrow{s\delta_{V}} H^{2}_{\text{\'{e}t}}(\mathcal
{O}_{S},\underline{\mu}_{2}) \to1
\end{equation}
in which the right exactness comes from the fact that $\mathcal{O}
_{S}$ is of Douai-type, thus $H^{2}_{\text{\'{e}t}}(\mathcal
{O}_{S},\underline{\text{\textbf{Spin}}}_{V})=1$ (see Definition~5.2
and Example~5.4(iii) in \cite{Gon}). The inclusion
$i:\underline{\text{\textbf{SO}}}_{V}\subset
\underline{\text{\textbf{O}}}_{V}$ with the map $i_{*}^{2}$ from
sequence (\ref{Kummermu2H1}) induces the commutative diagram (cf.
\cite[IV, 8.3]{Knu})
%
\begin{equation}
\label{sdl}
\xymatrix{
H^1_\text{\'et}(\mathcal{O}_S,\underline{\text{\textbf{SO}}}_{V}) \ar[r]^{i_*} \ar[d]^{s\delta_V} & H^1_\text{\'et}(\mathcal{O}_S,\underline{\text{\textbf{O}}}_{V}) \ar
[d]_{w_V} \ar[ld]^{\delta_V} \\
H^2_\text{\'et}(\mathcal{O}_S,\underline{\mu}_2) \ar[r]_{i_*^2} & {_2}{\mathrm{Br}}(\mathcal{O}_S).
}
\end{equation}

\begin{remark}
\label{restriction} The map $i_{*}$ does not have to be injective, yet
any form $q'$, properly isomorphic to $q$, represents a class in
$H^{1}_{
\text{\'{e}t}}(\mathcal{O}_{S},\underline{\text{\textbf{O}}}_{n})$, so
the restriction of $w$ to
$H^{1}_{\text{\'{e}t}}(\mathcal{O}_{S},\underline{\text{\textbf{SO}}}_{n})$
is well-defined. Similarly, we may write the restriction
$w_{V}|H^{1}_{\text{\'{e}t}}(\mathcal{O}_{S},\underline{\text{\textbf{SO}}}_{V})$
as $i_{*}^{2} \circ s \delta_{v}$, being surjective, as both
$i_{*}^{2}$ and $s \delta_{V}$ are such (see sequences \eqref{Kummermu2H1} and \eqref{H1Spin}).
\end{remark}

\begin{remark}
\label{projectiveplane1}
Unlike over fields, the Stiefel--Whitney class $w_{2}$ for quadratic
$\mathcal{O}_{S}$-spaces, referring to their Clifford algebras not only
as Azumaya algebras but as algebras with involution, is richer than the
Witt-invariant $w$ lying in ${_{2}}{\mathrm{Br}}(\mathcal{O}_{S})$. For
example, if $L$ is an invertible $\mathcal{O}_{S}$-module and
$H(L) = L \oplus L^{*}$ is the corresponding hyperbolic plane, then
$\text{\textbf{C}}(H(L))$ is isomorphic as a graded algebra to $\text{End}
_{\mathcal{O}_{S}}(\wedge L) = \text{End}_{\mathcal{O}_{S}}(
\mathcal{O}_{S}\oplus L)$ (\cite[IV, Prop.~2.1.1]{Knu}) being
Brauer-equivalent to $M_{2}(\mathcal{O}_{S})$, thus $w(H(L))=[0]
\in{\mathrm{Br}}(\mathcal{O}_{S})$, while:
\begin{equation*}
\delta([H(L)]) = \partial([L]) \stackrel{\text{Corollary}~\ref{PictoH2}}{=} \partial^{1}([\text{End}_{\mathcal{O}_{S}}(
\mathcal{O}_{S}\oplus L)]) \in H^{2}_{\text{\'{e}t}}(\mathcal
{O}_{S},\underline{\mu}_{2})
\end{equation*}
(see the left equality in the proof of the Proposition in
\cite[\S5.5]{EKV}), does not have to vanish as we shall see in Proposition~\ref{bijection}.
\end{remark}

\section{A classification of quadratic spaces via their genera}

Consider the ring of $S$-integral ad\`{e}les $\mathbb{A}_{S} :=
\prod_{\mathfrak{p}\in S} \hat{K}_{\mathfrak{p}}\times
\prod_{\mathfrak{p}\notin S} \hat{\mathcal{O}}_{\mathfrak{p}}$, being
a subring of the ad\`{e}les $\mathbb{A}$. Then the $S$-\emph{class set}
of an $\mathcal{O}_{S}$-group $\underline{G}$ is the set of double
cosets:
\begin{equation*}
\text{Cl}_{S}(\underline{G}) := \underline{G}(\mathbb{A}_{S}) \backslash
\underline{G}(\mathbb{A}) / G(K)
\end{equation*}
(where for any prime $\mathfrak{p}$ the geometric fiber $
\underline{G}_{\mathfrak{p}}$ of $\underline{G}$ is taken) and it is
finite (cf. \cite[Prop.~3.9]{BP}). If $\underline{G}$ is affine and
finitely generated over $\text{Spec} \,\mathcal{O}_{S}$, it admits
according to Nisnevich (\cite[Theorem~I.3.5]{Nis}) the following exact
sequence of pointed sets:
%
\begin{equation}
\label{Nissequence}
1 \to\text{Cl}_{S}(\underline{G}) \to H^{1}_{\text{\'{e}t}}(
\mathcal{O}_{S},\underline{G}) \to H^{1}(K,G) \times
\prod_{\mathfrak{p}\notin S} H^{1}_{\text{\'{e}t}}(\hat{\mathcal{O}}_{\mathfrak{p}},(\underline{G})_{\mathfrak{p}}).
\end{equation}
If $\underline{G}$ admits, furthermore, the property:
%
\begin{equation}
\label{locallyembedded}
\forall\mathfrak{p} \notin S:
\ \
H^{1}_{\text{\'{e}t}}(\hat{\mathcal{O}}_{\mathfrak{p}},\underline{G}_{\mathfrak{p}}) \hookrightarrow H^{1}(\hat{K}_{\mathfrak{p}},G_{
\mathfrak{p}}),
\end{equation}
then Nisnevich's sequence for $\underline{G}$ reduces to (cf.
\cite[Corollary~I.3.6]{Nis}, \cite[Corollary~A.8]{GP}):
%
\begin{equation}
\label{Nissimple}
1 \to\text{Cl}_{S}(\underline{G}) \to H^{1}_{\text{\'{e}t}}(
\mathcal{O}_{S},\underline{G}) \to H^{1}(K,G).
\end{equation}

\begin{remark}
\label{finiteetaleextensionisembeddedingenericfiber} Since $\text{Spec}
\,\mathcal{O}_{S}$ is normal, i.e., is integrally closed locally
everywhere (due to the smoothness of $C$), any finite \'{e}tale
covering of $\mathcal{O}_{S}$ arises by its normalization in some
separable unramified extension of $K$ (see \cite[Theorem~6.13]{Len}).
Consequently, if $\underline{G}$ is a finite $\mathcal{O}_{S}$-group,
then $H^{1}_{\text{\'{e}t}}(\mathcal{O}_{S}, \underline{G})$ is
embedded in $H^{1}(K,\underline{G})$. This is not true for infinite
groups like the multiplicative group $\underline{\mathbb{G}}_{m}$, for
which $H^{1}_{\text{\'{e}t}}(\mathcal
{O}_{S},\underline{\mathbb{G}}_{m}) \cong\text{Pic~}(\mathcal{O}_{S})$
clearly does not have to embed in $H^{1}(K,\mathbb{G}_{m})=1$.
\end{remark}

\begin{remark}
\label{genus}
In case $\underline{G}=\underline{\text{\textbf{O}}}_{V}$, the left exactness
of sequence \eqref{Nissequence} reflects the fact that $\text{Cl}
_{S}(\underline{\text{\textbf{O}}}_{V})$ is the \emph{genus} of the base point
$(V,q)$, namely, the set of classes of quadratic $\CS$-forms that are $K$ and
$\hat{\mathcal{O}}_{\mathfrak{p}}$-isomorphic to it for all
$\mathfrak{p} \notin S$. Furthermore, being connected, $
\underline{\text{\textbf{SO}}}_{V}$ admits property \eqref{locallyembedded}
by Lang's Theorem (recall that all residue fields are finite), so the
\emph{proper genus} can be described as:
%
\begin{equation}
\label{propergenus}
\text{Cl}_{S}(\underline{\text{\textbf{SO}}}_{V}) = \ker[H^{1}_{
\text{\'{e}t}}(\mathcal{O}_{S},\underline{\text{\textbf{SO}}}_{V}) \to H^{1}(K,
\text{\textbf{SO}}_{V})].
\end{equation}
As $\underline{\text{\textbf{O}}}_{V}/\underline{\text{\textbf{SO}}}_{V}$ is the
finite representable $\mathcal{O}_{S}$-group $\underline{\mu}_{2}$ (cf.
\cite[Theorem~1.7]{Con}), $\underline{\text{\textbf{O}}}_{V}$ admits property \eqref{locallyembedded} as well (see in the proof of Proposition~3.4
in \cite{CGP}), so we may also write:
%
\begin{equation}
\text{Cl}_{S}(\underline{\text{\textbf{O}}}_{V}) = \ker[H^{1}_{
\text{\'{e}t}}(\mathcal{O}_{S},\underline{\text{\textbf{O}}}_{V}) \to H^{1}(K,
\text{\textbf{O}}_{V})].
\end{equation}
As a pointed set, $\text{Cl}_{S}(\underline{\text{\textbf{SO}}}_{V})$ is
bijective to the first Nisnevich cohomology set $H^{1}_{\text{Nis}}(
\mathcal{O}_{S},\underline{\text{\textbf{SO}}}_{V})$ (cf.
\cite[Theorem~I.2.8]{Nis} and \cite[4.1]{Mor}), classifying
$\underline{\text{\textbf{SO}}}_{V}$-torsors in the Nisnevich topology. But
Nisnevich covers are \'{e}tale, so it is a subset of $H^{1}_{
\text{\'{e}t}}(\mathcal{O}_{S},\underline{\text{\textbf{SO}}}_{V})$. Similarly,
$\text{Cl}_{S}(\underline{\text{\textbf{O}}}_{V}) \subseteq H^{1}_{
\text{\'{e}t}}(\mathcal{O}_{S},\underline{\text{\textbf{O}}}_{V})$.
\end{remark}

\begin{lem}
\label{isotropicreflection}
If $(V,q)$ is isotropic then $\underline{\text{\textbf{O}}}_{V}(\mathcal{O}
_{S}) \xrightarrow{\det} \underline{\mu}_{2}(\mathcal{O}_{S})$ is
surjective.
\end{lem}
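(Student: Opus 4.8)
The plan is to exhibit an explicit self-isometry of $(V,q)$ with determinant $-1$, namely a reflection. First I would recall that isotropy gives a vector $v_0 \in V$ with $q(v_0)=0$ and, since the form is $\CS$-regular, $v_0$ can be chosen primitive (a basis vector of a rank-one direct summand) with $B_q(v_0,w)=1$ for some $w \in V$; this is the standard fact that a regular isotropic plane over a commutative ring in which $2$ is a unit contains a hyperbolic pair, so $(V,q)$ splits as $H(L_0) \perp (V',q')$ with $L_0$ free of rank one. The point of passing to a genuine hyperbolic \emph{plane} (rather than working with $v_0$ alone, whose ``reflection'' would require dividing by $q(v_0)=0$) is that on a split hyperbolic plane $H(\CS) = \CS e \oplus \CS f$ with $B_q(e,f)=1$, $q(e)=q(f)=0$, the linear map interchanging $e \leftrightarrow f$ and fixing the complement $(V',q')$ pointwise is manifestly an $\CS$-isometry, and its matrix in the basis adapted to this decomposition has determinant $-1$.

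The key steps, in order, are: (1) Invoke regularity plus isotropy to produce a hyperbolic summand $H(\CS) \hookrightarrow (V,q)$ defined over $\CS$ — this is where one uses that $\CS^\times \ni -1$ so that, after scaling, a primitive isotropic vector extends to a hyperbolic pair (see \cite[I.~\S3]{Knu} or the corresponding statement in \cite{Con}); here one should be slightly careful that the complement is taken as the orthogonal complement $\{x : B_q(x,e)=B_q(x,f)=0\}$, which is a direct summand precisely because the pairing on $\CS e \oplus \CS f$ is unimodular. (2) Define $\tau \in \underline{\text{\textbf{O}}}_V(\CS)$ by $\tau(e)=f$, $\tau(f)=e$, $\tau|_{(V',q')} = \id$; check it preserves $q$ (immediate: $q(\tau(ae+bf+x')) = q(bf+ae+x') = ab + q'(x') = q(ae+bf+x')$). (3) Compute $\det\tau$: the block on $H(\CS)$ is $\left(\begin{smallmatrix} 0 & 1 \\ 1 & 0\end{smallmatrix}\right)$ with determinant $-1$, the block on $V'$ is the identity, so $\det\tau = -1$, i.e. $\det\tau$ is the non-trivial element of $\underline{\mu}_2(\CS)$. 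Since $\underline{\mu}_2(\CS) = \{\pm 1\}$ has exactly two elements and $\id \mapsto 1$, $\tau \mapsto -1$, the map is surjective.

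The only real obstacle is step (1): guaranteeing that the hyperbolic pair, and hence the splitting, is defined \emph{over $\CS$} and not merely over $K$ or over the localizations. This is genuinely a statement about projective modules over a Dedekind domain — one must check that a primitive isotropic vector $v_0$ (primitive in the sense that $V/\CS v_0$ is projective) can be completed to a hyperbolic pair, which follows from $\CS$-regularity: the functional $B_q(v_0,-) : V \to \CS$ is part of a unimodular row because $v_0$ is primitive and $B_q$ induces $V \cong V^\vee$, so it takes the value $1$ somewhere, say $B_q(v_0,w_0)=1$, and then $w := w_0 - q(w_0)v_0$ satisfies $q(w)=0$, $B_q(v_0,w)=1$. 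Then $H(\CS) := \CS v_0 \oplus \CS w$ is a unimodular (hence orthogonally complemented, hence direct) summand. Once this is in place the rest is the short computation above. Note also that the lemma as stated asserts surjectivity onto $\underline{\mu}_2(\CS)$, not onto $\underline{\mu}_2$ as a sheaf, so no descent or cohomological argument is needed — working with honest $\CS$-points suffices.
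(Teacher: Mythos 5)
Your steps (2) and (3) are fine as computations, but step (1) is a genuine gap, and the claim it rests on is false in general. Isotropy gives a nonzero $v$ with $q(v)=0$, and its saturation $Kv\cap V$ is indeed a rank-one direct summand of $V$ on which $q$ vanishes --- but that summand is only an \emph{invertible} $\mathcal{O}_S$-module, not necessarily free, and $v$ generates it only when it is principal. What regularity actually yields is a splitting $V=H(\mathfrak{a})\perp V_0$ with $H(\mathfrak{a})=\mathfrak{a}v_0\oplus\mathfrak{a}^{-1}v_1$ for a fractional ideal $\mathfrak{a}$ (this is Gerstein's Proposition~2.1, quoted in Section~5 of the paper), and the swap $v_0\leftrightarrow v_1$ is not an $\mathcal{O}_S$-linear endomorphism of $\mathfrak{a}v_0\oplus\mathfrak{a}^{-1}v_1$ unless $\mathfrak{a}\cong\mathfrak{a}^{-1}$, i.e.\ $[\mathfrak{a}]^2=1$ in $\mathrm{Pic}(\mathcal{O}_S)$. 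To see that no unimodular isotropic vector need exist at all, take $V=H(\mathfrak{a})\perp\mathfrak{b}\langle\lambda\rangle$ of rank $3$ (regular when $2\lambda\mathfrak{b}^2=\mathcal{O}_S$): for $v=(a,b,c)$ one has $B_q(v,V)=a\mathfrak{a}^{-1}+b\mathfrak{a}+c\mathfrak{b}^{-1}$, and writing $a\mathcal{O}_S=\mathfrak{a}\mathfrak{d}_1$, $b\mathcal{O}_S=\mathfrak{a}^{-1}\mathfrak{d}_2$, $c\mathcal{O}_S=\mathfrak{b}\mathfrak{c}$, the isotropy relation $ab=-\lambda c^2$ gives $\mathfrak{d}_1\mathfrak{d}_2=\mathfrak{c}^2$ while unimodularity forces $\mathfrak{d}_1,\mathfrak{d}_2$ coprime; coprime ideals with square product are themselves squares, so $[\mathfrak{a}]=[\mathfrak{d}_1]^{-1}$ must lie in $\mathrm{Pic}(\mathcal{O}_S)^2$. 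Whenever $[\mathfrak{a}]$ is not a square there is no primitive isotropic vector in your sense, hence no free hyperbolic summand and no swap. (Compare Proposition~\ref{bijection}: $H(L_0)\perp V_0$ and $H(\mathcal{O}_S)\perp V_0$ are already non-isomorphic for $[L_0]\neq 0$ in $\mathrm{Pic}(\mathcal{O}_S)/2$.)

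The lemma is nonetheless true, and the paper proves it by exactly the cohomological route you hoped to avoid: in the exact sequence $\underline{\textbf{O}}_V(\mathcal{O}_S)\xrightarrow{\det}\underline{\mu}_2(\mathcal{O}_S)\xrightarrow{\partial_0}H^1_{\text{\'et}}(\mathcal{O}_S,\underline{\textbf{SO}}_V)$ one checks that $\partial_0(-1)$ becomes trivial in $H^1_{\text{\'et}}(\mathcal{O}_S,\underline{\textbf{O}}_V)$, hence has trivial image under $s\delta_V$, hence lies in the image of $H^1_{\text{\'et}}(\mathcal{O}_S,\underline{\textbf{Spin}}_V)$ --- and the latter vanishes by strong approximation (Lemma~\ref{H1=1sc}) precisely because an $\mathcal{O}_S$-isotropic vector makes $\textbf{Spin}_V$ isotropic at every place of $S$. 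Your explicit construction does work whenever the isotropic line found is principal (in particular over a PID), and for odd $n$ one can bypass everything with $-\mathrm{id}_V$; but in even rank with a non-square ideal class the determinant~$-1$ isometry is not visible by any block construction relative to the splitting, which is why the descent argument is doing real work here.
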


\begin{proof}
Consider the following exact and commutative diagram that arises by
applying \'{e}tale cohomology to the short exact sequences related to
the smooth $\mathcal{O}_{S}$-groups $\underline{\text{\textbf{Pin}}}_{V}$ and
$\underline{\text{\textbf{O}}}_{V}$:
\begin{equation*}
\xymatrix{
& & H^1_\text{\'et}(\mathcal{O}_S,\underline{\text{\textbf{Spin}}}_V) \ar[r] \ar[d]^{s\pi_*} & H^1_\text{\'et}
(\mathcal{O}_S,\underline{\text{\textbf{Pin}}}_V) \ar[d]^{\pi_*} \\
\underline{\text{\textbf{O}}}_{V}(\mathcal{O}_S) \ar[r]^{\det} &\underline{\mu}_2(\mathcal{O}_S) \ar[r]^{\partial_0} & H^1_\text{\'et}
(\mathcal{O}_S,\underline{\text{\textbf{SO}}}_{V}) \ar[r]^{h} \ar[d]^{s\delta_V} & H^1_\text{\'et}(\mathcal{O}_S,\underline{\text{\textbf{O}}}_{V}) \ar[d]^{\delta_V}
\\
& & H^2_\text{\'et}(\mathcal{O}_S,\underline{\mu}_2) \ar@{=}[r] & H^2_\text{\'et}(\mathcal{O}_S,\underline{\mu}_2).
}
\end{equation*}
Denote $[\gamma] = \partial_{0}(-1)$. Then $s\delta_{V}([\gamma]) =
\delta_{V}(h([\gamma])=[0])=[0]$, hence $[\gamma] \in
\operatorname{Im}(s\pi_{*})$. But as $q$ is isotropic, $H^{1}_{
\text{\'{e}t}}(\mathcal{O}_{S},\underline{\text{\textbf{Spin}}}_{V})$ vanishes
by strong approximation (cf. Lemma~\ref{H1=1sc}), so $[\gamma] = [0]$,
which means that $\partial_{0}$ is the trivial map and $\det(
\mathcal{O}_{S})$ surjects on $\underline{\mu}_{2}(\mathcal{O}_{S})$.
\end{proof}

\begin{lem}
\label{samegenus}
If $n$ is odd, or $(V,q)$ is isotropic, then $\text{Cl}_{S}(\underline{\text{\textbf{SO}}}
_{V}) = \text{Cl}_{S}(\underline{\text{\textbf{O}}}_{V})$.
\end{lem}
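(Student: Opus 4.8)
The plan is to show that the two class sets coincide as subsets of $H^1_\et(\CO_S,\SOV)$ and $H^1_\et(\CO_S,\OV)$ respectively by exhibiting that the natural map $i_*$ identifies them, using the exact sequences already set up in the excerpt. The key exact tool is the cohomology sequence attached to $1 \to \SOV \to \OV \xrightarrow{\det} \un{\mu}_2 \to 1$, which reads
\begin{equation*}
\OV(\CO_S) \xrightarrow{\det} \un{\mu}_2(\CO_S) \xrightarrow{\partial_0} H^1_\et(\CO_S,\SOV) \xrightarrow{i_*} H^1_\et(\CO_S,\OV) \xrightarrow{\det_*} H^1_\et(\CO_S,\un{\mu}_2),
\end{equation*}
together with the analogous sequence over $K$ and over each $\hat\CO_\fp$, fitting into a commutative ladder. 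Recall from Remark~\ref{genus} that $\ClS(\SOV)$ and $\ClS(\OV)$ are the kernels of the restriction maps to $H^1(K,\cdot)$ (and the local terms), so everything takes place inside these kernels.

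First I would handle the case that $(V,q)$ is isotropic. By Lemma~\ref{isotropicreflection}, $\det:\OV(\CO_S)\to\un{\mu}_2(\CO_S)$ is surjective, so $\partial_0$ is the trivial map and hence $i_*:H^1_\et(\CO_S,\SOV)\to H^1_\et(\CO_S,\OV)$ has trivial kernel; more precisely, by the standard twisting argument for exact sequences of pointed sets, the fibers of $i_*$ are orbits of $\un{\mu}_2(\CO_S)$ acting through $\partial_0$, so $i_*$ is injective. The same surjectivity of $\det$ holds over $K$ and over each $\hat\CO_\fp$ with $\fp\notin S$ (isotropy is inherited, and one invokes the field/complete-local analogue — over a field this is classical, using a reflection; over $\hat\CO_\fp$ one uses that $q$ stays regular and, if isotropic there, the same reflection argument, and in any case $\un{\mu}_2(\hat\CO_\fp)$ is hit because $-1$ is a determinant of a reflection in a regular isotropic plane). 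Then a diagram chase in the ladder shows $i_*$ carries $\ClS(\SOV)$ bijectively onto $\ClS(\OV)$: injectivity is immediate, and for surjectivity, given a class in $\ClS(\OV)$, lift it along $i_*$ (possible since its $\det_*$-image in $H^1_\et(\CO_S,\un{\mu}_2)$ is trivial — it dies in $H^1(K,\un{\mu}_2)$ and $\un{\mu}_2$ is finite, so by Remark~\ref{finiteetaleextensionisembeddedingenericfiber} it is already trivial in $H^1_\et(\CO_S,\un{\mu}_2)$), and check the lift lands in the proper genus by chasing the vanishing of its restrictions.

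For the case $n$ odd, I would instead exploit the isomorphism $\OV \cong \SOV \times \un{\mu}_2$ of $\CO_S$-group schemes, valid precisely because $n$ is odd and $2$ is a unit: the central $-\id \in \OV(\CO_S)$ has determinant $(-1)^n = -1$, giving a splitting of $\det$. Hence $H^1_\et(\CO_S,\OV) \cong H^1_\et(\CO_S,\SOV) \times H^1_\et(\CO_S,\un{\mu}_2)$ compatibly with the analogous splittings over $K$ and over each $\hat\CO_\fp$. Taking kernels of the restriction maps and using once more that $H^1_\et(\CO_S,\un{\mu}_2)\hookrightarrow H^1(K,\un{\mu}_2)$ (Remark~\ref{finiteetaleextensionisembeddedingenericfiber}), the $\un{\mu}_2$-factor contributes trivially to the genus, so $\ClS(\OV)=\ClS(\SOV)\times\{*\}=\ClS(\SOV)$.

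The main obstacle is the isotropic case: one must be careful that the bijection $i_*:\ClS(\SOV)\to\ClS(\OV)$ is genuinely onto, i.e. that every $\OV$-class in the genus admits a representative with trivial $\det_*$-invariant that still lies in the $\SOV$-genus. This is where the finiteness of $\un{\mu}_2$ (hence $H^1_\et(\CO_S,\un{\mu}_2)\hookrightarrow H^1(K,\un{\mu}_2)$) and the local surjectivity of $\det$ at every $\fp\notin S$ must be combined in the commutative ladder; writing this chase out carefully, rather than the formal injectivity of $i_*$, is the substantive point. One should also verify that "$(V,q)$ isotropic over $\CO_S$'' forces isotropy of the relevant fibers in the genus — but this is automatic since being in the genus means $K$- and $\hat\CO_\fp$-isometry to $(V,q)$, which transports isotropy.
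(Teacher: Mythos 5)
Your proposal is correct and follows essentially the same route as the paper: injectivity of the comparison map $\ClS(\SOV)\to\ClS(\OV)$ via surjectivity of $\det$ on $\CS$-points (Lemma~\ref{isotropicreflection} in the isotropic case, the splitting $\OV\cong\SOV\times\un{\mu}_2$ when $n$ is odd), and surjectivity via the embedding $H^1_\et(\CS,\un{\mu}_2)\hookrightarrow H^1(K,\mu_2)$ of Remark~\ref{finiteetaleextensionisembeddedingenericfiber} in the commutative ladder. The one point to tighten is that ``$\partial_0$ trivial'' does not by itself make $i_*$ injective on all of $H^1_\et(\CS,\SOV)$ (the stabilizers of the $\un{\mu}_2(\CS)$-action vary with the class, being the images of $\det$ on the \emph{twisted} groups $\OV[']$, and for $n=3,4$ anisotropic twists outside the genus could have nontrivial fibers); one must apply the twisted coboundary and invoke Lemma~\ref{isotropicreflection} for each representative of the genus --- all of which are isotropic, as you note at the end --- which is exactly the restriction the paper makes.
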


\begin{proof}
Any representative $(V',q')$ of a class in
$\text{Cl}_{S}(\underline{\text{\textbf{O}}}_{V})$, being $K$
isomorphic to $q$, is regular and isotropic as well, whence
$\underline{\text{\textbf{O}}}_{V'}(\mathcal{O}_{S}) \to
\underline{\mu}_{2}(\mathcal{O}_{S})$ is surjective by Lemma~\ref{isotropicreflection}. 
When $n$ is odd, this surjectivity is retrieved by the fact that $\underline{\text{\textbf{O}}}_{V'} \cong \underline{\text{\textbf{SO}}}_{V'} \times \underline{\mu}_2$ (cf. \cite[Thm.1.7]{Con}), and so applying \'{e}tale cohomology to the
exact sequence of smooth groups
\begin{equation*}
1 \to\underline{\text{\textbf{SO}}}_{V'} \to\underline{\text{\textbf{O}}}_{V'}
\to\underline{\mu}_{2} \to1
\end{equation*}
we get that
$\ker[H^{1}_{\text{\'{e}t}}(\mathcal{O}_{S},\underline{\text{\textbf{SO}}}_{V'})
\xrightarrow{\psi'} H^{1}_{\text{\'{e}t}}(
\mathcal{O}_{S},\underline{\text{\textbf{O}}}_{V'})]=1$ for any
$[(V',q')] \in\text{Cl}_{S}(\underline{\text{\textbf{O}}}_{V})$, which
means that the restricted map
$\text{Cl}_{S}(\underline{\text{\textbf{SO}}}_{V}) \xrightarrow{ \psi}
\text{Cl}_{S}(\underline{\text{\textbf{O}}}_{V})$ is injective.
Together with Remark~\ref{genus}, this amounts to the existence of the
following exact and commutative diagram:
\begin{equation*}
\xymatrix{
& \text{Cl}_S(\underline{\text{\textbf{SO}}}_{V}) \ar@{^{(}->}[r]^{\psi} \ar@{^{(}->}[d]^{i} & \text{Cl}_S(\underline{\text{\textbf{O}}}_{V}) \ar
@{^{(}->}[d]^{i'} \\
1 \ar[r] & H^1_\text{\'et}(\mathcal{O}_S,\underline{\text{\textbf{SO}}}_{V}) \ar[r]^{\psi'} \ar[d]^{m} & H^1_\text{\'et}(\mathcal{O}_S,\underline{\text{\textbf{O}}}_{V}
) \ar[d]^{m'} \ar[r]^{d} & H^1_\text{\'et}(\mathcal{O}_S,\underline{\mu}_2) \ar
@{^{(}->}[d]^{m''} \\
1 \ar[r] & H^1(K,\text{\textbf{SO}}_V) \ar@{^{(}->}[r]^{h} & H^1(K,\textbf
{O}_V) \ar[r]^{d'} & H^1(K,\mu_2)
}
\end{equation*}
in which as $m''$ is injective due to Remark~\ref{finiteetaleextensionisembeddedingenericfiber}, 
$\psi$ is also surjective, thus is the identity.
\end{proof}

\begin{prop}
\label{Wittexactness}
Let $(V,q)$ be a regular quadratic $\mathcal{O}_{S}$-space of rank
$n \geq3$ with proper genus $\text{Cl}_{S}(\underline{\text{\textbf{SO}}}
_{V})$. The relative Witt-invariant (cf. Definition~\ref{relativeWitt-invariant} and Remark~\ref{restriction}) induces an
exact sequence of pointed sets
\begin{equation*}
1 \to\text{Cl}_{S}(\underline{\text{\textbf{SO}}}_{V}) \xrightarrow{h} H
^{1}_{\text{\'{e}t}}(\mathcal{O}_{S},\underline{\text{\textbf{SO}}}_{V})
\xrightarrow{w
_{V}} {_{2}{\mathrm{Br}}(\mathcal{O}_{S})} \to1
\end{equation*}
in which $h$ is injective and ${_{2}}{\mathrm{Br}}(\mathcal{O}_{S})$
bijects to the set of $2^{|S|-1}$ proper genera of $q$.
\end{prop}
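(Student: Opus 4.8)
The plan is to exhibit the stated sequence as an amalgam of three facts already assembled in the excerpt: the identification \eqref{propergenus} of $\text{Cl}_S(\SOV)$ as $\ker[H^1_\et(\CS,\SOV) \to H^1(K,\textbf{SO}_V)]$, the surjectivity of the relative Witt-invariant restricted to $H^1_\et(\CS,\SOV)$ (Remark~\ref{restriction}, where $w_V| = i_*^2 \circ s\delta_V$ is a composite of two surjections), and the cardinality count $|{_2}\Br(\CS)| = 2^{|S|-1}$ from Lemma~\ref{no.genera}. The injectivity of $h$ is immediate: $h$ is the restriction of the inclusion $\text{Cl}_S(\SOV) \hookrightarrow H^1_\et(\CS,\SOV)$ noted at the end of Remark~\ref{genus} (Nisnevich covers are \'etale), so there is nothing to prove there beyond citing it. Surjectivity of $w_V$ on $H^1_\et(\CS,\SOV)$ is likewise already recorded in Remark~\ref{restriction}.

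The substantive content is exactness at the middle term, i.e.\ $\operatorname{Im}(h) = \ker(w_V)$. For the inclusion $\operatorname{Im}(h) \subseteq \ker(w_V)$, I would chase the commutative diagram \eqref{sdl} together with \eqref{propergenus}: a class in $\text{Cl}_S(\SOV)$ maps to the trivial class in $H^1(K,\textbf{SO}_V)$, hence its image under $s\delta_V$ is the restriction from $K$ of $\delta_{\textbf{SO}_V}$ applied to a trivial class; over the field $K$ the relative Witt-invariant of the split form vanishes, so $w_V$ kills it. More precisely, I would invoke the compatibility of $s\delta_V$ and $w_V$ with base change to $K$ together with the injectivity of ${_2}\Br(\CS) \hookrightarrow {_2}\Br(K)$ coming from Grothendieck's sequence \eqref{Grothendieck}: since the image in $_2\Br(K)$ is $w_{V_K}$ of the trivial $K$-class, which is $[0]$, the class in $_2\Br(\CS)$ is $[0]$ too. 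For the reverse inclusion $\ker(w_V) \subseteq \operatorname{Im}(h)$, suppose $w_V([q']) = 0$. Then by \eqref{Kummermu2H1} the class $s\delta_V([q'])$ lies in $\partial(\Pic(\CS)/2)$; I would use the fact (Remark~\ref{projectiveplane1}, Corollary~\ref{PictoH2}) that such classes in $H^2_\et(\CS,\underline\mu_2)$ are realized as $\delta$ of hyperbolic planes $H(L)$, which are themselves $K$-isomorphic to the split form, to produce a correction: twisting $[q']$ by the corresponding $\SOV$-cocycle yields a class with trivial $s\delta_V$, hence (by the exactness \eqref{H1Spin} and $H^1_\et(\CS,\underline{\text{\textbf{Spin}}}_V)$ controlled via Lemma~\ref{H1=1sc} where applicable — but more robustly, via \eqref{propergenus} directly) a class in the proper genus. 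Alternatively and more cleanly: a class killed by $w_V$ has image in $H^1(K,\textbf{SO}_V)$ whose $s\delta$ lands in the image of $\Pic(K)/2 = 0$, so is trivial there, placing $[q']$ in $\ker[H^1_\et \to H^1(K)] = \text{Cl}_S(\SOV)$.

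Once exactness is in hand, the final clause follows formally. The sequence identifies $_2\Br(\CS)$ with the set of fibers of $w_V$, and by the standard torsor-twisting argument for exact sequences of pointed sets, each nonempty fiber of $w_V$ is a single proper genus (a twisted form of $\text{Cl}_S(\SOV)$ sitting inside $H^1_\et(\CS,\SOV)$); surjectivity of $w_V$ makes every fiber nonempty, so the proper genera are in bijection with $_2\Br(\CS)$, and Lemma~\ref{no.genera} gives the count $2^{|S|-1}$.

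\medskip

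I expect the main obstacle to be the reverse inclusion $\ker(w_V) \subseteq \operatorname{Im}(h)$, specifically verifying that a class in $H^1_\et(\CS,\SOV)$ on which $w_V$ vanishes already has trivial image in $H^1(K,\textbf{SO}_V)$. The clean route is: $w_V([q']) = 0$ forces $s\delta_V([q'])$ into $\partial(\Pic(\CS)/2) \subset H^2_\et(\CS,\underline\mu_2)$ by \eqref{Kummermu2H1}; restricting to $K$, the image of $\Pic(\CS)/2$ dies since $\Pic(K) = 0$, so the image of $[q']$ in $H^1(K,\textbf{SO}_V)$ has trivial $s\delta$ over $K$; because over the field $K$ the map $s\delta$ on $H^1(K,\textbf{SO}_V)$ has trivial kernel precisely when $\textbf{Spin}_V$-torsors over $K$ are trivial (true, e.g.\ the generic fiber is isotropic enough, or one argues via $H^1(K,\textbf{Spin}_V)$), the $K$-class is trivial. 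The delicate point is whether $H^1(K,\textbf{Spin}_V) \to H^1(K,\textbf{SO}_V)$ having trivial-image-only-from-trivial-source is automatic; over a global function field this needs the Hasse principle for $\textbf{Spin}_V$ plus local triviality, which is where strong approximation (Lemma~\ref{H1=1sc}) genuinely enters. I would isolate this as the one place requiring care and handle it by reducing, via \eqref{propergenus}, to the already-cited input rather than re-deriving it.
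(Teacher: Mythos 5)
Your proposal follows the paper's proof essentially step for step: injectivity of $h$ from Remark~\ref{genus}, surjectivity of $w_V$ from Remark~\ref{restriction}, middle exactness by restricting to the generic fibre and combining the injectivity of ${_2}{\mathrm{Br}}(\mathcal{O}_S)\hookrightarrow{_2}{\mathrm{Br}}(K)$ (Grothendieck's sequence \eqref{Grothendieck}) with the injectivity of $H^1(K,\text{\textbf{SO}}_V)\to H^2(K,\mu_2)\cong{_2}{\mathrm{Br}}(K)$, and finally the identification of the fibres of $w_V$ with the proper genera together with Lemma~\ref{no.genera} for the count $2^{|S|-1}$. Your ``clean route'' for $\ker(w_V)\subseteq\operatorname{Im}(h)$ is exactly the paper's diagram \eqref{Wittdiagram}; your first route via twisting by hyperbolic planes is unnecessary and harder to make precise, so it is good that you discard it.

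One correction on the point you rightly isolate as delicate: the vanishing $H^1(K,\text{\textbf{Spin}}_{V'})=1$ (needed for $V$ and, for the genus-counting clause, for all twisted forms $V'$) is \emph{not} supplied by Lemma~\ref{H1=1sc} or by strong approximation. That lemma concerns $H^1_{\text{\'et}}(\mathcal{O}_S,\underline{G})$ for integral models and requires $\hat K_{\mathfrak{p}}$-isotropy at every $\mathfrak{p}\in S$, which is not assumed in this proposition (it must cover anisotropic $(V,q)$, e.g.\ the rank-$3$ and $4$ cases discussed later). The correct input, and the one the paper uses, is Harder's theorem [Hard, Satz~A] on the vanishing of the Galois cohomology of simply connected semisimple groups over a global function field, applied over $K$ to $\text{\textbf{Spin}}_{V}$ and its twisted forms. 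With that substitution your argument is complete and coincides with the paper's.
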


\begin{proof}
Consider the short exact sequence induced by the double covering of the
generic fiber
\begin{equation*}
1 \to\mu_{2} \to\text{\textbf{Spin}}_{V} \to\text{\textbf{SO}}_{V} \to1.
\end{equation*}
As $\text{\textbf{Spin}}_{V}$ is simply connected, we know due to
Harder that $H^{1}(K,\text{\textbf{Spin}}_{V})=1$ (cf.
\cite[Satz~A]{Hard}). This is true for all twisted forms of $\text{\textbf{Spin}}_{V}$, whence Galois cohomology implies the embedding
$H^{1}(K,\text{\textbf{SO}}_{V}) \hookrightarrow H^{2}(K,\mu_{2})$. Due
to Hilbert's Theorem~90, applying Galois cohomology to the Kummer's
exact sequence related to $\mu_{2}$ over $K$ gives an isomorphism
$H^{2}(K,\mu_{2}) \cong{_{2} {\mathrm{Br}}(K)}$. Moreover, as shown in
the sequence \eqref{Grothendieck}, ${\mathrm{Br}}(\mathcal{O}_{S})$ is
a subgroup of ${\mathrm{Br}}(K)$. All together, the relative
Witt-invariant applied to classes in
$H^{1}_{\text{\'{e}t}}(\mathcal{O}_{S},\underline{\text{\textbf{SO}}}_{V})$
and on their generic fibers, yields the following exact and commutative
diagram:
%
\begin{equation}
\label{Wittdiagram}
\xymatrix{
H^1_\text{\'et}(\mathcal{O}_S,\underline{\text{\textbf{SO}}}_{V}) \ar[r]^{w_V} \ar[d] & {_2 {\mathrm{Br}}(\mathcal{O}_S)} \ar@{^{(}->}[d] \\
H^1(K,\text{\textbf{SO}}_V) \ar@{^{(}->}[r]^{w_V} & {_2 {\mathrm{Br}}(K)}
}
\end{equation}
which justifies the left exactness in the asserted sequence:
%
\begin{eqnarray}
\label{genusinjection}
\text{Cl}_{S}(\underline{\text{\textbf{SO}}}_{V}) \stackrel{\text{(\ref{propergenus})}}{=} \ker[H^{1}_{\text{\'{e}t}}(\mathcal
{O}_{S},\underline{\text{\textbf{SO}}}_{V}) \to
H^{1}(K,\text{\textbf{SO}}_{V})] = \ker[H^{1}_{
\text{\'{e}t}}(\mathcal{O}_{S},\underline{\text{\textbf{SO}}}_{V})
\xrightarrow{w _{V}} {_{2} {\mathrm{Br}}(\mathcal{O}_{S})}].\nonumber\\
\end{eqnarray}
The surjectivity of $w_{V}:H^{1}_{\text{\'{e}t}}(\mathcal
{O}_{S},\underline{\text{\textbf{SO}}}_{V})
\to{_{2}{\mathrm{Br}}(\mathcal{O}_{S})}$ (cf. Remark~\ref{restriction})
completes the proof.

The first equality in \eqref{genusinjection} suggests that for any
$[q'] \in
H^{1}_{\text{\'{e}t}}(\mathcal{O}_{S},\underline{\text{\textbf{SO}}}_{V})$,
$q' \in\text{Cl}_{S}(\underline{\text{\textbf{SO}}}_{V})$ if and only
if $q'$ is $K$-isomorphic to $q$ while the second equality claims that
this holds if and only if $w_{V}(q')=w_{V}(q)$. This is true for any
choice of base point, being regular as well. So as $H^{1}_{
\text{\'{e}t}}(\mathcal{O}_{S},\underline{\text{\textbf{SO}}}_{V})$ is
a disjoint union of the proper genera of $q$, together with the
surjectivity of $w_{V}$, we deduce that the set of proper genera
bijects with $_{2} {\mathrm{Br}}(\mathcal{O}_{S})$, whose cardinality
is computed in Lemma~\ref{no.genera}. 
\end{proof}

\begin{theorem}
\label{isotropicclassification}
Let $(V,q)$ be a regular quadratic $\mathcal{O}_{S}$-space
of rank $n \geq3$. Then there exists a surjection of pointed-sets: 
${Cl}_{S}(\underline{\text{\textbf{SO}}}_{V}) \twoheadrightarrow \text{Pic~}(\mathcal{O}_{S})/2$. 
If $(V,q)$ is isotropic, then this is a bijection, 
so the abelian group $\text{Pic~}(\mathcal{O}_{S})/2$ is isomorphic to ${Cl}_{S}(\underline{\text{\textbf{SO}}}_{V}) = 
 {Cl}_{S}(\underline{\text{\textbf{O}}}_{V})$. 
\end{theorem}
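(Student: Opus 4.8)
The plan is to realise the required surjection as a restriction of the coboundary map $s\delta_V$ of sequence \eqref{H1Spin}, and then, in the isotropic case, to prove its injectivity by the standard twisting argument for the coboundary of a central extension, the crucial input being that the étale $H^1$ over $\mathcal{O}_S$ of the Spin group of an isotropic regular form vanishes (Lemma~\ref{H1=1sc}).

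First I would build the surjection. By Remark~\ref{restriction} the relative Witt-invariant restricted to $H^1_{\text{\'et}}(\mathcal{O}_S,\underline{\text{\textbf{SO}}}_V)$ factors as $w_V = i_*^2 \circ s\delta_V$, and by Proposition~\ref{Wittexactness} its kernel is precisely $\text{Cl}_S(\underline{\text{\textbf{SO}}}_V)$. Since $\ker(i_*^2) = \partial(\Pic(\mathcal{O}_S)/2)$ with $\partial$ injective (sequence \eqref{Kummermu2H1}), this identifies $\text{Cl}_S(\underline{\text{\textbf{SO}}}_V)$ with $(s\delta_V)^{-1}\bigl(\partial(\Pic(\mathcal{O}_S)/2)\bigr)$; composing $s\delta_V$ with $\partial^{-1}$ then yields a morphism of pointed sets $\bar\delta\colon \text{Cl}_S(\underline{\text{\textbf{SO}}}_V)\to\Pic(\mathcal{O}_S)/2$ (the base point $[(V,q)]$ going to $0$). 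It is surjective: given $y\in\Pic(\mathcal{O}_S)/2$, lift $\partial(y)$ to some $[z]\in H^1_{\text{\'et}}(\mathcal{O}_S,\underline{\text{\textbf{SO}}}_V)$ using the surjectivity of $s\delta_V$ in \eqref{H1Spin}; then $w_V([z]) = i_*^2(\partial(y)) = 0$, so $[z]\in\text{Cl}_S(\underline{\text{\textbf{SO}}}_V)$ and $\bar\delta([z]) = y$. This proves the first assertion for every regular $(V,q)$ of rank $n\geq 3$, with no isotropy hypothesis.

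Next I would prove that $\bar\delta$ is injective once $(V,q)$ is isotropic, equivalently that every fibre of $s\delta_V$ meeting $\text{Cl}_S(\underline{\text{\textbf{SO}}}_V)$ is a singleton. Fix $[z]\in\text{Cl}_S(\underline{\text{\textbf{SO}}}_V)$ and let $(V_z,q_z)$ be the corresponding quadratic $\mathcal{O}_S$-space. Twisting the central extension \eqref{Spinshortsequence} by the cocycle $z$ gives $1\to\underline{\mu}_2\to\underline{\text{\textbf{Spin}}}_{V_z}\to\underline{\text{\textbf{SO}}}_{V_z}\to 1$, together with the base-point-translating bijection $t_z\colon H^1_{\text{\'et}}(\mathcal{O}_S,\underline{\text{\textbf{SO}}}_{V_z})\xrightarrow{\sim}H^1_{\text{\'et}}(\mathcal{O}_S,\underline{\text{\textbf{SO}}}_V)$ carrying the base point to $[z]$ and satisfying $s\delta_V(t_z(\xi)) = s\delta_{V_z}(\xi) + s\delta_V([z])$ in the abelian group $H^2_{\text{\'et}}(\mathcal{O}_S,\underline{\mu}_2)$; hence the fibre of $s\delta_V$ through $[z]$ equals $t_z(\ker s\delta_{V_z})$, which by exactness of \eqref{H1Spin} for the twisted group is $t_z$ of the image of $H^1_{\text{\'et}}(\mathcal{O}_S,\underline{\text{\textbf{Spin}}}_{V_z})$. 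Now $q_z$ is $K$-isomorphic to $q$ (cf. \eqref{propergenus}), hence isotropic over $K$ and so over every $\hat{K}_{\mathfrak{p}}$; thus the generic fibre of $\underline{\text{\textbf{Spin}}}_{V_z}$ is simply connected, almost simple and $\hat{K}_{\mathfrak{p}}$-isotropic for every $\mathfrak{p}\in S$, so Lemma~\ref{H1=1sc} gives $H^1_{\text{\'et}}(\mathcal{O}_S,\underline{\text{\textbf{Spin}}}_{V_z}) = 0$ (for $n=4$ one applies the lemma to the almost-simple factors of $\text{Spin}_{V_z}$, the strong-approximation input being unaffected). Therefore the fibre through $[z]$ is $\{[z]\}$, so $s\delta_V$, and a fortiori $\bar\delta$, is injective on $\text{Cl}_S(\underline{\text{\textbf{SO}}}_V)$; being surjective as well, $\bar\delta$ is a bijection. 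Transporting the abelian group structure of $\Pic(\mathcal{O}_S)/2$ along $\bar\delta$ exhibits $\text{Cl}_S(\underline{\text{\textbf{SO}}}_V)$ as an abelian group isomorphic to $\Pic(\mathcal{O}_S)/2$, and $\text{Cl}_S(\underline{\text{\textbf{SO}}}_V) = \text{Cl}_S(\underline{\text{\textbf{O}}}_V)$ by Lemma~\ref{samegenus} in the isotropic case.

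I expect the twisting step to be the main obstacle: one has to set up the central-extension twisting formalism in the étale topology over $\mathcal{O}_S$ carefully enough to see that the fibre of $s\delta_V$ through a point $[z]$ of the genus is the image of the $H^1$ of the twisted Spin group $\underline{\text{\textbf{Spin}}}_{V_z}$, and then check that Lemma~\ref{H1=1sc} still applies to $\underline{\text{\textbf{Spin}}}_{V_z}$ — the only substantive point being that every form in the genus of an isotropic form remains $\hat{K}_{\mathfrak{p}}$-isotropic for $\mathfrak{p}\in S$, which is automatic since such forms are $K$-isomorphic to $q$. Everything else is a formal consequence of the sequences \eqref{Kummermu2H1} and \eqref{H1Spin}, the diagram \eqref{sdl}, Proposition~\ref{Wittexactness} and Lemma~\ref{samegenus}.
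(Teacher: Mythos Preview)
Your proposal is correct and follows essentially the same route as the paper: build the map $\text{Cl}_S(\underline{\text{\textbf{SO}}}_V)\to\Pic(\mathcal{O}_S)/2$ by restricting $s\delta_V$ and factoring through $\partial$ via a Snake-type chase of diagram \eqref{Snakediagram}, then prove injectivity in the isotropic case by twisting the Spin exact sequence at each $[z]$ in the genus and invoking Lemma~\ref{H1=1sc} for the twisted Spin group $\underline{\text{\textbf{Spin}}}_{V_z}$, exactly as in the paper's diagram~\eqref{Gir}. Your parenthetical handling of $n=4$ (applying Lemma~\ref{H1=1sc} to the almost-simple factors) is a nice touch that the paper leaves implicit; conversely, the paper cites Hasse--Minkowski for the passage from $K$-isotropy to $\hat{K}_{\mathfrak{p}}$-isotropy, which you correctly treat as automatic.
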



\begin{proof}
Consider the following exact and
commutative diagram derived from sequences \eqref{Kummermu2H1} and \eqref{H1Spin}:
%
\begin{align}
\label{Snakediagram}
\xymatrix{
& 1 \ar[r] \ar[d] & H^1_\text{\'et}(\mathcal{O}_S,\underline{\text{\textbf{SO}}}_{V}) \ar@{=}[r] \ar@{->>}[d]^{s\delta_V}
& H^1_\text{\'et}(\mathcal{O}_S,\underline{\text{\textbf{SO}}}_{V}) \ar@{->>}[d]^{w_V} \ar[r] & 1 \\
1 \ar[r] & \text{Pic~}(\mathcal{O}_S)/2 \ar[r]^{\partial} & H^2_\text{\'et}(\mathcal{O}_S,\underline{\mu}_2) \ar
[r]^{i_*^2} & {_2 {\mathrm{Br}}(\mathcal{O}_S)} \ar[r] & 1.
}
\end{align}
We imitate the Snake Lemma argument (though the diagram terms are not
all groups): according to Proposition~\ref{Wittexactness} $\text{Cl}
_{S}(\underline{\text{\textbf{SO}}}_{V}) = \ker(w_{V})$, hence for any
$[q'] \in\text{Cl}_{S}(\underline{\text{\textbf{SO}}}_{V})$ one has
$i_{*}^{2}(s\delta_{V}([q']))=[0]$, i.e., $[q']$ has a
$\partial$-preimage in $\text{Pic~}(\mathcal{O}_{S})/2$ which is unique
as $\partial$ is a monomorphism of groups. Moreover, any element in
$\text{Pic~}(\mathcal{O}_{S})/2$ arises in this way, since $s\delta
_{V}$ is surjective. As a result we have an exact sequence of pointed
sets:
\begin{equation*}
1 \to\mathfrak{K}_{1} \to\text{Cl}_{S}(\underline{\text{\textbf{SO}}}_{V})
\xrightarrow{s\delta_{V}} \text{Pic~}(\mathcal{O}_{S})/2 \to1.
\end{equation*}
If $(V,q)$ is isotropic, then $s\delta_{V}$ is also injective. 
Indeed, 
let $\underline{\text{\textbf{SO}}}_{V'}$ be a twisted form of $\underline{\text{\textbf{SO}}}_{V}$, properly
stabilizing a form $q'$, and let $\underline{\text{\textbf{Spin}}}_{V'}$ be its
Spin group. The lower row in the following exact diagram is the one
obtained when replacing the base point $q$ by $q'$, 
as described in \cite[IV, Proposition~4.3.4]{Gir}:
\begin{equation} \label{Gir}
\xymatrix{
        \mathfrak{K}_1             \ar[r] & \ClS(\SOV)         \ar@{}[d]|-*[@]{\subset} \ar@{->>}[r]^{s\dl_V}   & \Pic(\CS) / 2  \ar[d]  \\
H^1_\et(\CS,\un{\textbf{Spin}}_V)  \ar[r] & H^1_\et(\CS,\SOV)  \ar@{->>}[r]^{s\dl_V}                            & H^2_\et(\CS,\un{\mu}_2)    \\
H^1_\et(\CS,\un{\textbf{Spin}}_{V'}) \ar[r] & H^1_\et(\CS,\SOV') \ar@{->>}[r]^{\dl_{V'}} \ar[u]_{\cong}^{\theta}  & H^2_\et(\CS,\un{\mu}_2) \ar[u]^{r}_{\cong}  
}
\end{equation} 
If $[q'] \in\text{Cl}_{S}(\underline{\text{\textbf{SO}}}_{V})$, then
$q'$, being $K$-isomorphic to $q$, is also $K$-isotropic, as well as
the generic fiber $\text{\textbf{Spin}}_{V'}$. Then by the
Hasse--Minkowsky Theorem (cf. \cite[VI.3.1]{Lam}), $q'$ is
$\hat{K}_{\mathfrak{p}}$-isotropic everywhere, in particular in $S$.
Hence $H^{1}_{\text{\'{e}t}}(
\mathcal{O}_{S},\underline{\text{\textbf{Spin}}}_{V'})$ is trivial by Lemma~\ref{H1=1sc} for any class in
$\text{Cl}_{S}(\underline{\text{\textbf{SO}}} _{V})$
($\underline{\text{\textbf{SO}}}_{V}$ is not commutative for $n \geq3$
thus
$H^{1}_{\text{\'{e}t}}(\mathcal{O}_{S},\underline{\text{\textbf{SO}}}_{V})$
does not have to be a group, so the triviality of
$H^{1}_{\text{\'{e}t}}(\mathcal{O}_{S},\underline{\text{\textbf{Spin}}}_{V})$
does not imply the injectivity of $s\delta_{V}$, i.e., there still
might be distinct anisotropic classes in
$H^{1}_{\text{\'{e}t}}(\mathcal{O}
_{S},\underline{\text{\textbf{SO}}}_{V})$ whose images in $H^{2}_{
\text{\'{e}t}}(\mathcal{O}_{S},\underline{\mu}_{2})$ coincide).  
So $\text{Cl}_{S}(\underline{\text{\textbf{SO}}}_{V})$, being equal in the isotropic case to $\text{Cl} _{S}(\underline{\text{\textbf{O}}}_{V})$
by Lemma~\ref{samegenus}, embeds in $\text{Pic~}(\mathcal{O}_{S})/2$
and the assertion follows.
\end{proof}

\begin{definition}
We say that the \emph{local-global Hasse principle} holds for a
quadratic $\mathcal{O}_{S}$-space $(V,q)$ if
$|\text{Cl}_{S}(\underline{\text{\textbf{O}}}_{V})|=1$.
\end{definition}

\begin{cor}
The Hasse principle holds for a regular isotropic quadratic
$\mathcal{O}_{S}$-form of rank $\geq3$ if and only if $|\text{Pic~}(
\mathcal{O}_{S})|$ is odd.
\end{cor}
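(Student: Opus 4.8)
The plan is to read the statement off directly from Theorem~\ref{isotropicclassification}, together with one elementary fact about finite abelian groups. Since $(V,q)$ is regular and isotropic of rank $n\geq 3$, Theorem~\ref{isotropicclassification} gives an isomorphism of abelian groups
\[
\ClS(\OV) = \ClS(\SOV) \cong \text{Pic}(\CS)/2 .
\]
By definition the Hasse principle holds for $(V,q)$ precisely when $|\ClS(\OV)|=1$, so it holds if and only if the group $\text{Pic}(\CS)/2$ is trivial.

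The second step is to convert the triviality of $\text{Pic}(\CS)/2$ into a parity condition on $|\text{Pic}(\CS)|$. The key input is that $\text{Pic}(\CS)$ is a \emph{finite} abelian group: $\CS=\BF_q[C-S]$ is a Hasse domain, and the localization sequence
\[
\bigoplus_{\fp\in S}\Z \to \text{Pic}(C) \to \text{Pic}(\CS) \to 0
\]
exhibits $\text{Pic}(\CS)$ as a quotient of $\text{Pic}(C)$ by the subgroup generated by the (finitely many) classes of the closed points in $S$; since $S$ is non-empty it contains a point of positive degree, so this quotient kills the $\Z$-summand of $\text{Pic}(C)$ and is a quotient of the finite group $\text{Pic}^0(C)=\text{Jac}(C)(\BF_q)$, hence finite (this is standard and may simply be cited). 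For a finite abelian group $A$, multiplication by $2$ is surjective if and only if it is injective, i.e. $A/2A=0$ if and only if $A$ has no element of order $2$, which happens if and only if $|A|$ is odd.

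Combining the two steps,
\[
|\ClS(\OV)| = |\text{Pic}(\CS)/2| = 1 \iff |\text{Pic}(\CS)| \text{ is odd},
\]
which is the assertion. I do not anticipate a genuine obstacle here: essentially all the content is carried by Theorem~\ref{isotropicclassification}, and the only point requiring care is recording the finiteness of $\text{Pic}(\CS)$, without which the equivalence ``doubling surjective $\Leftrightarrow$ doubling injective'' would fail.
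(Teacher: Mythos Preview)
Your proposal is correct and is exactly the intended argument: the paper states the corollary without proof, treating it as an immediate consequence of Theorem~\ref{isotropicclassification} together with the (implicit) finiteness of $\text{Pic}(\mathcal{O}_S)$. Your only addition is to make explicit why $\text{Pic}(\mathcal{O}_S)$ is finite and why finiteness is needed for the parity equivalence, which is a welcome clarification rather than a departure.
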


\begin{cor}
\label{genera} Let $(V,q)$ be an $\mathcal{O}_{S}$-regular quadratic
space of rank $\geq5$. Then the pointed-set 
$H^{1}_{\text{\'{e}t}}(\mathcal{O}_{S},\underline{\textbf{SO}}_{V})$
is isomorphic to the abelian group $H^{2}_{
\text{\'{e}t}}(\mathcal{O}_{S},\underline{\mu}_{2})$, 
i.e., any $\mathcal{O}_{S}$-isomorphism class in the proper classification corresponds to an Azumaya $\mathcal{O}_{S}$-algebra with involution. There are $2^{|S|-1}$ genera, each of them is isomorphic to $\text{Pic~}(\mathcal{O}_{S})/2$.
\end{cor}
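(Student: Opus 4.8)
The plan is to assemble Corollary~\ref{genera} from the structural results already established, with the rank hypothesis $n \geq 5$ doing exactly one job: guaranteeing that every class is isotropic. First I would recall that for $n \geq 5$ every regular quadratic space over $K$ is $\hat{K}_{\mathfrak{p}}$-isotropic at every place (any quadratic form in $\geq 5$ variables over a non-dyadic local field is isotropic), so by the Hasse--Minkowski theorem over the global function field $K$, every $\mathcal{O}_S$-regular form of rank $n \geq 5$ in $H^1_\et(\mathcal{O}_S,\underline{\text{\textbf{SO}}}_V)$ is $K$-isotropic, hence isotropic as an $\mathcal{O}_S$-space. In particular, every base point one might choose in any proper genus is isotropic.

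Next I would invoke Theorem~\ref{isotropicclassification} uniformly: for \emph{each} proper genus, pick a representative $(V',q')$; since it is isotropic, $s\delta_{V'}$ restricts to a bijection $\text{Cl}_S(\underline{\text{\textbf{SO}}}_{V'}) \xrightarrow{\cong} \text{Pic~}(\mathcal{O}_S)/2$, and by Lemma~\ref{samegenus} this genus equals $\text{Cl}_S(\underline{\text{\textbf{O}}}_{V'})$. Combined with Proposition~\ref{Wittexactness}, which exhibits $H^1_\et(\mathcal{O}_S,\underline{\text{\textbf{SO}}}_V)$ as the disjoint union of its proper genera indexed bijectively by ${_2}{\mathrm{Br}}(\mathcal{O}_S)$ via $w_V$, each fiber being a single genus isomorphic to $\text{Pic~}(\mathcal{O}_S)/2$. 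So $H^1_\et(\mathcal{O}_S,\underline{\text{\textbf{SO}}}_V)$ is, as a set, partitioned into $|{_2}{\mathrm{Br}}(\mathcal{O}_S)| = 2^{|S|-1}$ copies of $\text{Pic~}(\mathcal{O}_S)/2$ (Lemma~\ref{no.genera}).

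The final and most delicate point is upgrading this set-theoretic bijection to the statement that $H^1_\et(\mathcal{O}_S,\underline{\text{\textbf{SO}}}_V)$ \emph{is} (isomorphic to) the abelian group $H^2_\et(\mathcal{O}_S,\underline{\mu}_2)$. For this I would use diagram \eqref{Snakediagram}: its bottom row is the short exact sequence \eqref{Kummermu2H1} of abelian groups $1 \to \text{Pic~}(\mathcal{O}_S)/2 \xrightarrow{\partial} H^2_\et(\mathcal{O}_S,\underline{\mu}_2) \xrightarrow{i_*^2} {_2}{\mathrm{Br}}(\mathcal{O}_S) \to 1$, and the vertical maps $s\delta_V$ and $w_V$ are the relevant maps out of $H^1_\et(\mathcal{O}_S,\underline{\text{\textbf{SO}}}_V)$. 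In the isotropic situation $s\delta_V$ is injective on each genus and surjective overall, so it gives a bijection of pointed sets $H^1_\et(\mathcal{O}_S,\underline{\text{\textbf{SO}}}_V) \to H^2_\et(\mathcal{O}_S,\underline{\mu}_2)$ compatible with the projections to ${_2}{\mathrm{Br}}(\mathcal{O}_S)$; transporting the abelian group structure along this bijection makes $H^1_\et(\mathcal{O}_S,\underline{\text{\textbf{SO}}}_V)$ an abelian group, and on each genus it restricts to the $\text{Pic~}(\mathcal{O}_S)/2$-structure via $\partial$. I expect the main obstacle to be precisely the bookkeeping of why $s\delta_V$ is a \emph{global} bijection rather than merely a fiberwise one: one must check that distinct genera map to distinct cosets of $\partial(\text{Pic~}(\mathcal{O}_S)/2)$ in $H^2_\et(\mathcal{O}_S,\underline{\mu}_2)$, which is exactly the commutativity $i_*^2 \circ s\delta_V = w_V$ in \eqref{sdl} together with the injectivity of $w_V$ restricted to the genus-index set; the isotropy hypothesis, via Lemma~\ref{H1=1sc} applied to all twisted Spin forms, is what rules out the ``distinct anisotropic classes with equal $\delta_V$-image'' pathology flagged in the proof of Theorem~\ref{isotropicclassification}. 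Finally, the counting statement ``$2^{|S|-1}$ genera, each isomorphic to $\text{Pic~}(\mathcal{O}_S)/2$'' is then immediate from Lemma~\ref{no.genera} and the per-genus bijection, and the interpretation in terms of Azumaya algebras with involution follows from Remark~\ref{Stiefel-Whitney} identifying $\delta$ with the second Stiefel--Whitney class.
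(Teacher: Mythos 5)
Your proposal is correct and follows essentially the same route as the paper: isotropy of every rank-$\geq 5$ class, Theorem~\ref{isotropicclassification} applied genus by genus, and Proposition~\ref{Wittexactness} together with Lemmas~\ref{samegenus} and~\ref{no.genera} for the indexing of the $2^{|S|-1}$ genera by ${_2}{\mathrm{Br}}(\mathcal{O}_{S})$. The only cosmetic difference is that you obtain the global bijectivity of $s\delta_{V}$ directly from fiberwise injectivity plus separation of the genera via $i_{*}^{2}\circ s\delta_{V}=w_{V}$, whereas the paper concludes by comparing cardinalities of the finite sets in the displayed diagram; the two arguments are interchangeable here.
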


\begin{proof}
In rank $\geq5$, any quadratic $\mathcal{O}_{S}$-space is isotropic;
indeed, for any such $(V',q')$, the generic fiber $q'_{K}:= q' \otimes
K$ is isotropic (cf. \cite[Theorem~66:2]{OMe}), i.e., there exists a
non-zero vector $v_{0} \in V' \otimes K$ such that $q'_{k}(v_{0})=0$.
Since $K$ is the fraction field of the Dedekind domain $\mathcal{O}
_{S}$, there exists a non-zero vector $\underline{v}_{0} \in Kv_{0}
\cap\mathcal{O}_{S}$ for which $q(\underline{v}_{0})=0$. Hence
according to Theorem~\ref{isotropicclassification}, we deduce that the
genus of any $[(V',q')] \in H^{1}_{\text{\'{e}t}}(\mathcal
{O}_{S},\underline{\text{\textbf{SO}}}_{V})$ is isomorphic to the
abelian group $\text{Pic~}( \mathcal{O}_{S})/2$ and injects into
$H^{1}_{\text{\'{e}t}}(
\mathcal{O}_{S},\underline{\text{\textbf{SO}}}_{V})$. Looking at the
obtained exact and commutative diagram:
\begin{equation*}
\xymatrix{
1 \ar[r] & \text{Cl}_S(\underline{\text{\textbf{SO}}}_{V}) \ar[r] \ar[d]^{\cong} & H^1_\text{\'et}(\mathcal{O}_S,\underline{\text{\textbf{SO}}}_{V}) \ar
[r]^{w_V} \ar@{->>}[d]^{s\delta_V} & {_2 {\mathrm{Br}}(\mathcal{O}_S)} \ar[r] \ar@{=}[d] & 1 \\
1 \ar[r] & \text{Pic~}(\mathcal{O}_S)/2 \ar[r]^{\partial} & H^2_\text{\'et}(\mathcal{O}_S,\underline{\mu}_2) \ar
[r]^{i_*^2} & {_2 {\mathrm{Br}}(\mathcal{O}_S)} \ar[r] & 1
}
\end{equation*}
we see that the cardinality of $H^{1}_{\text{\'{e}t}}(\mathcal
{O}_{S},\underline{\text{\textbf{SO}}}_{V})$, being the disjoint union
of its genera, equals the one of its $s\delta_{V}$-image
$H^{2}_{\text{\'{e}t}}(\mathcal {O}_{S},\underline{ \mu}_{2})$, hence
it is isomorphic to it. We have seen in Proposition~\ref{Wittexactness} 
that ${_{2}}{\mathrm{Br}}(\mathcal{O}_{S})$ bijects with the set of
proper genera of $q$. Here, as any twisted form of $q$ is isotropic,
its proper genus is equal to its genus (Lemma~\ref{samegenus}), thus
there are $2^{|S|-1}$ such genera (Lemma~\ref{no.genera}). 
\end{proof}

\begin{remark}
Since as we have seen any integral quadratic form of rank $\geq 5$ is isotropic, according to Lemma \ref{samegenus}, 
$\text{Cl}_S(\underline{\text{\textbf{SO}}}_{V})$ might not be equal to $\text{Cl}_S(\underline{\text{\textbf{O}}}_{V})$ 
(for $\text{rank}(V) \geq 3$) only when $(V,q)$ is anisotropic of rank $4$. 
\end{remark}

\section{A splitting hyperbolic plane}\label{hyperbolicplane}

In this section, we refer to regular quadratic $\mathcal{O}_{S}$-spaces
being split by a hyperbolic plane $P$ (thus being isotropic and so
$\text{Cl}_{S}(\underline{\text{\textbf{O}}}_{V}) =
\text{Cl}_{S}(\underline{\text{\textbf{SO}}}_{V})$). Such $P$ contains
a hyperbolic pair $\{v_{0},v _{1}\}$ of $V$, namely, satisfying:
$q(v_{0})=q(v_{1})=0$ and $B_{q}(v_{0},v_{1})=1$, and it is of the form
$P_{\mathfrak{a}}= \mathfrak{a}v_{0} + \mathfrak{a}^{-1} v_{1}$ for
some fractional ideal $\mathfrak{a}$ of $\mathcal{O}_{S}$ (cf.
\cite[Proposition~2.1]{Ger}). L. J. Gerstein established in
\cite[Theorem~4.5]{Ger} for the ternary case the bijection:
\begin{equation*}
\psi: \text{Pic~}(\mathcal{O}_{S})/2 \xrightarrow{\sim} \text{Cl}
_{S}(\underline{\text{\textbf{O}}}_{V}): \ \ [\mathfrak{a}] \mapsto[V_{
\mathfrak{a}}] := [P_{\mathfrak{a}}\bot\mathfrak{b}\lambda]
\end{equation*}
where $\mathfrak{b}$ is again a fractional ideal of $\mathcal{O}_{S}$
and $\lambda\in\mathcal{O}_{S}^{\times}$, for which $V \cong V_{
\mathfrak{a}}$. The existence of $\psi$ can be viewed as a particular
case of our Theorem~\ref{isotropicclassification} (such a splitting
does not necessarily exist). The following Lemma suggests an alternative
bijection which resembles the one of Gerstein, but does not, however,
require finding and multiplying by a hyperbolic pair, and, more
important, is valid for any rank $n \geq3$.

\begin{prop}
\label{bijection} Suppose $V$ is split by a hyperbolic plane
$V=H(L_{0}) \bot V_{0}$ where $L_{0}$ is an $\mathcal{O}_{S}$-line.
Then $\psi_{V}: [L] \mapsto[V _{L} = H(L_{0} \otimes L^{*}) \bot
V_{0}]$ is an isomorphism of groups $\text{Pic~}(\mathcal{O}_{S})/2
\cong\text{Cl}_{S}(\underline{\text{\textbf{O}}}_{V})$.
\end{prop}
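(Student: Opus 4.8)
The plan is to identify $\psi_V$ with the well-understood map $\partial : \Pic(\CS)/2 \to H^2_\et(\CS,\un{\mu}_2)$ of sequence \eqref{Kummermu2H1}, transported through the isotropic-case bijection of Theorem~\ref{isotropicclassification}. First I would verify that $V_L$ really lies in the genus $\ClS(\OV)$: since $L$ is invertible and locally trivial, $H(L_0 \otimes L^*)$ is locally (at every $\fp \notin S$ and over $K$) isometric to $H(L_0)$, so $V_L$ is locally everywhere isometric to $V$; moreover $V_L$ is isotropic, so by Lemma~\ref{samegenus} it defines a class in $\ClS(\SOV) = \ClS(\OV)$. This makes $\psi_V$ a well-defined map of pointed sets $\Pic(\CS)/2 \to \ClS(\OV)$, sending the trivial line to the base point $[V]$.

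Next I would compute the composite $s\delta_V \circ \psi_V$. By the structure of Clifford algebras of hyperbolic planes (Remark~\ref{projectiveplane1} and \cite[IV, Prop.~2.1.1]{Knu}), $\textbf{C}(H(L_0 \otimes L^*)) \cong \End_{\CS}(\CS \oplus (L_0 \otimes L^*))$, and the orthogonal sum decomposition $V_L = H(L_0 \otimes L^*) \bot V_0$ gives, via the graded tensor product formula for Clifford algebras of orthogonal sums, that $\delta([V_L]) - \delta([V])$ equals $\delta([H(L_0\otimes L^*)]) - \delta([H(L_0)]) = \partial([L_0 \otimes L^*]) - \partial([L_0]) = \partial([L^*]) = \partial([L])$ in $H^2_\et(\CS,\un{\mu}_2)$ (using additivity of $\partial$ on $\Pic(\CS)/2$, where $[L]=[L^*]$ since the group is $2$-torsion). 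By Remark~\ref{Stiefel-Whitney} and the definition of the relative Witt invariant, $s\delta_V(\psi_V([L])) = \partial([L])$. Now chase the diagram \eqref{Snakediagram}: in the isotropic case Theorem~\ref{isotropicclassification} tells us $s\delta_V$ restricts to a bijection $\ClS(\SOV) \xrightarrow{\sim} \Pic(\CS)/2$, and $\partial$ is injective, so $\psi_V = (s\delta_V|_{\ClS(\SOV)})^{-1} \circ \partial$ is a bijection $\Pic(\CS)/2 \xrightarrow{\sim} \ClS(\OV)$.

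It remains to check that $\psi_V$ is a \emph{group} homomorphism, i.e.\ that $\ClS(\OV)$ carries the abelian group structure transported from $\Pic(\CS)/2$ (equivalently from $H^2_\et(\CS,\un{\mu}_2)$ through Theorem~\ref{isotropicclassification} and \eqref{Kummermu2H1}) and that $\psi_V$ respects it. Since we have just shown $\psi_V$ agrees with $\partial$ after applying the group isomorphism $(s\delta_V|_{\ClS(\SOV)})^{-1}$, and $\partial$ is a group homomorphism by \eqref{Kummermu2H1}, this is automatic: $\psi_V(\,[L]\otimes[L']\,) = \partial([L]) \cdot \partial([L']) = \psi_V([L]) \cdot \psi_V([L'])$ under the induced operation. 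I expect the main obstacle to be the Clifford-algebra bookkeeping in the middle step: one must be careful that the graded (super) tensor product $\textbf{C}(H(L_0\otimes L^*) \bot V_0) \cong \textbf{C}(H(L_0 \otimes L^*)) \,\widehat{\otimes}\, \textbf{C}(V_0)$ together with the even/odd decomposition yields exactly the claimed difference $\partial([L])$ in the Stiefel--Whitney (not merely Witt) invariant, and that the dependence on $L_0$ itself cancels—precisely the phenomenon flagged in Remark~\ref{projectiveplane1}. Once that identity is in hand, the bijectivity and homomorphism property follow formally from the already-established Theorem~\ref{isotropicclassification} and sequence \eqref{Kummermu2H1}.
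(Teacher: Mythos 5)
Your proposal follows essentially the same route as the paper's proof: reduce via Theorem~\ref{isotropicclassification}, Lemma~\ref{samegenus} and diagram~\eqref{Snakediagram} to showing $s\delta_V\circ\psi_V=\partial$, then compute $\delta([V_L])-\delta([V])$ using the Whitney sum formula together with the identity $\delta([H(M)])=\partial([M])$ of Remark~\ref{projectiveplane1}. The one step you flag but do not carry out --- that the dependence on $L_0$ cancels --- is exactly what the paper's displayed computations \eqref{w1} and \eqref{diffofw2} supply (the point being $w_1(H(M))=w_1(M)+w_1(M^*)=0$, so the cross-term $w_1(H(\cdot))\cdot w_1(V_0)$ in the Whitney formula drops out), and your shortcut $\partial([L_0\otimes L^*])-\partial([L_0])=\partial([L])$ for the remaining difference of hyperbolic-plane invariants is a valid, slightly cleaner substitute for the paper's explicit $w_1\cdot w_1$ manipulation in \eqref{diffofw2}.
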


\begin{proof}
$L^{*}$ is locally free, thus $V_{L}$ obtained from $V$ by tensoring
$H(L_{0})$ with $L^{*}$ remains in
$\text{Cl}_{S}(\underline{\text{\textbf{O}}}_{V})$. Due to Theorem~\ref{isotropicclassification}, the groups
$\text{Pic~}(\mathcal{O}_{S})/2$ and
$\text{Cl}_{S}(\underline{\text{\textbf{O}}}_{V})$ are isomorphic
through diagram \eqref{Snakediagram}, so it is sufficient to show by
its commutativity that $s\delta_{V} \circ\psi_{V}$ coincides with the
groups embedding $\partial$. The $i$'th \emph{Stiefel--Whitney class}
$w_{i}(E)$ of a regular $\mathcal{O}_{S}$-module $E$, as defined in
\cite[\S1]{EKV}, gets values in
$H^{i}_{\text{\'{e}t}}(\mathcal{O}_{S},\underline{ \mu}_{2})$ for $i
\geq1$ and \mbox{$w_{0}(E)=1$}. Its basic axioms, namely, $w_{i}(E)=0$ for all
$i > \text{rank}(E)$ and for any direct sum of regular
$\mathcal{O}_{S}$-modules of finite rank
\begin{equation*}
w_{k}(E \oplus F) = \sum_{i+j=k} w_{i}(E) \cdot w_{j}(F),
\end{equation*}
imply that:
%
\begin{equation}
\label{w2}
w_{1}(E \oplus F) = w_{1}(E) + w_{1}(F)
\ \
\text{and:}
\ \
w_{2}(E \oplus F) = w_{2}(E) + w_{2}(F) + w_{1}(E) \cdot w_{1}(F).
\end{equation}
If $L$ is an $\mathcal{O}_{S}$-line and $H(L) = L \oplus L^{*}$ is the
corresponding hyperbolic plane, this reads:
\begin{equation*}
w_{1}(H(L)) = w_{1}(L) + w_{1}(L^{*})
\ \
\text{while}
\ \
w_{2}(H(L)) = w_{1}(L) \cdot w_{1}(L^{*}).
\end{equation*}
Moreover, $w_{1}$ furnishes an isomorphism of abelian groups
$\{ \mathcal{O}_{S}\text{-lines}, \otimes\}/\sim\cong H^{1}_{
\text{\'{e}t}}(\mathcal{O}_{S},\underline{\mu}_{2})$ by
$$ w_1(L_1 \otimes L_2) = w_1(L_1) + w_1(L_2), \ \ \text{thus:} \ \ w_1(L) = - w_1(L^*),  $$
hence
\begin{align} \label{w1} 
w_1(H(L_0 \otimes L^*)) - w_1(H(L_0)) &= w_1(L_0 \otimes L^*) + w_1(L_0^* \otimes L) - w_1(L_0) - w_1(L_0^*) \\ \nonumber
                                      &= w_1(L_0) + w_1(L^*) + w_1(L_0^*) + w_1(L) - w_1(L_0) - w_1(L_0^*) = 0 
\end{align}
and:
\begin{align} \label{diffofw2}
w_2(H(L_0 \otimes L^*)) - w_2(H(L_0)) &= (w_1(L_0) + w_1(L^*)) \cdot (w_1(L_0^*) + w_1(L)) - w_1(L_0) \cdot w_1(L_0^*) \\ \nonumber
                                      &= w_1(L^*) \cdot w_1(L_0^*) + w_1(L_0) \cdot w_1(L) + w_1(L^*) \cdot w_1(L) \\ \nonumber
                                      &= w_1(L^*) \cdot w_1(L_0^*) - w_1(L_0^*) \cdot w_1(L) + w_1(L^*) \cdot w_1(L) \\ \nonumber
                                      &= w_1(L^*) \cdot w_1(L) = w_2(H(L)).   
\end{align}
In our setting (recall that $\delta=w_{2}$, see Remark~\ref{Stiefel-Whitney}), we get:
%
\begin{align}
\label{diffofdl}
\delta([V_{L}]) - \delta([V])
&\stackrel{\text{(\ref{w2})}}{=} \delta([H(L
_{0} \otimes L^{*})]) + \delta([V_{0}]) + w_{1}(H(L_{0} \otimes L
^{*})) \cdot w_{1}(V_{0})
\\
\nonumber
& \ -\Big( \delta([H(L_{0})]) + \delta([V_{0}]) + w_{1}(H(L_{0}))
\cdot w_{1}(V_{0}) \Big)
\\
\nonumber
&\stackrel{\text{(\ref{w1})}}{=} \delta([H(L_{0} \otimes L^{*})]) - \delta
([H(L_{0})]) \stackrel{\text{(\ref{diffofw2})}}{=} \delta([H(L)]).
\end{align}
Altogether, we may finally conclude that for any $[L] \in\text{Pic~}(
\mathcal{O}_{S})/2$:
\begin{align*}
(s\delta_{V} \circ\psi_{V})([L])
&= s\delta_{V}([V_{L}]) \stackrel{\text{(\ref{relativew})}}{=} (r_{V} \circ\delta\circ\theta^{-1})([V_{L}])
= (r_{V} \circ\delta)([V_{L}])
\\
\nonumber
&= \delta([V_{L}])-\delta([V]) \stackrel{\text{(\ref{diffofdl})}}{=}
\delta([H(L)]) \stackrel{\text{Remark~\ref{projectiveplane1}}}{=}
\partial([L]). \qedhere
\end{align*}
\end{proof}

\subsection{$|S|=1$}

If $S=\{\infty\}$ where $\infty$ is an arbitrary closed point, then
$C^{\text{af}}:= C -\{\infty\}$ is an affine curve whence ${\mathrm{Br}}(
\mathcal{O}_{S})=1$ (cf. Lemma~\ref{no.genera}), i.e., there is a
single genus, and any $\mathcal{O}_{S}$-regular quadratic space
$(V,q)$ of rank $n \geq3$ is isotropic (cf. \cite{Bit}). Suppose
furthermore that $C$ is an elliptic curve and $\infty$ is $
\mathbb{F}_{q}$-rational. For any place $P$ on $C^{\text{af}}$ we define
the maximal ideal of $\mathcal{O}_{S}$
\begin{equation*}
\mathfrak{m}_{P} := \left\{ f \in\mathcal{O}_{S}\ : \ f(P) = 0
\right\} .
\end{equation*}
Then we have an isomorphism of abelian groups (see
\cite[II,~Proposition~6.5(c)]{Hart} for the first map and p.~393 in
\cite{Bau} for the second one):
\begin{equation*}
\varphi: C(\mathbb{F}_{q}) \cong\text{Pic~}^{0}(C) \cong
\text{Pic~}(\mathcal{O}_{S}): \ \ P \mapsto[P] - [\infty] \mapsto
\mathfrak{m}_{P}.
\end{equation*}
Since $\mathcal{O}_{S}$ is Dedekind, Steinitz's Theorem (cf. \cite[Corollary~6.1.9]{BK}
tells us that for any fractional ideal
$\mathfrak{a}$ of $\mathcal{O}_{S}$ there is an
$\mathcal{O}_{S}$-isomorphism of $\mathcal{O}_{S}$-modules 
(though not of quadratic $\mathcal{O}_{S}$-spaces):
$$ \chi: \fa \oplus \fa^{-1} \to \CS \oplus \CS. $$ 
Towards our purpose of finding representatives of twisted $K$-forms of
$q$, we may choose such a $K$-isomorphism $\chi_{K}$ to be as in
\cite[Lemma~20.17]{Cla}:
\begin{equation*}
\chi_{K}:(\alpha,\beta) \mapsto(\alpha,\beta) \cdot A_{
\mathfrak{a}}, \ \ A_{\mathfrak{a}}=
\left(
\begin{array}{c@{\quad}c}
b_{1} & -a_{2}
\\
b_{2} & a_{1}
\end{array}
\right)
\end{equation*}
where $a_{1}b_{1} + a_{2}b_{2}=1, \ b_{1} \in\mathfrak{a}^{-1}, \ b
_{2} \in\mathfrak{a}$. According to Proposition~\ref{bijection}, the
matrix $A_{\mathfrak{m}_{P}}^{-t} B_{q} A_{\mathfrak{m}_{P}}^{-1}$
represents, for any coset $[P] \in C(\mathbb{F}_{q})/2$, a distinct
class of quadratic $\mathcal{O}_{S}$-forms in
$\text{Cl}_{S}(\underline{\text{\textbf{O}}}_{V}) =
H^{1}_{\text{\'{e}t}}(\mathcal{O}_{S},\underline{\text{\textbf{SO}}}_{V})$.

We summarize this procedure by the following algorithm:

\begin{algorithm} 
\caption{Generator of classes representatives isomorphic in the flat topology to a regular quadratic space split by a hyperbolic plane,  
over the coordinate ring of an affine non-singular elliptic curve} \label{Generatoralgorithm}
\textbf{Input:} $C$ = elliptic projective $\BF_q$-curve, $S=\{\iy \in C(\BF_q) \}$, 
$V = H(L_0) \bot V_0$ = quadratic regular $\CS$-space of rank $n \geq 3$, 
$H(L_0)$ is represented by $F_0 \in \textbf{GL}_2(\CS)$.  
\begin{algorithmic} 
\State Compute $C(\BF_q)$ and 
$\CS:=C^\af(\BF_q)$ where $C^\af := C-\{\iy\}$.  
\For{each $[P] \in C(\BF_q)/2$ }
\State $\fm_P = \{  f \in \CS \ : \ f(P) = 0 \}$ 
for $P \neq \iy$ and $\fm_{\iy}=\CS$. 
\State Find $a_1,b_2 \in \fm_P$ and $a_2,b_1 \in \fm_P^{-1}$ such that $a_1 b_1 + a_2 b_2 =1$.  
\State  
$A_{\fm_P}^{-1} = \left( \begin{array}{cc}
    a_1  & a_2  \\ 
    -b_2 & b_1  \\ 
\end{array}\right)$ 
and: $F_P = A_{\fm_P}^{-t} F_0 A_{\fm_P}^{-1}$.
\EndFor
\State \textbf{Output:} $\ClS(\OV) = \{ [V_P \bot V_0] : [P] \in C(\BF_q)/2 \}$, where $V_P$ is the quadratic $\CS$-form represented by $F_P$.  
\end{algorithmic}
\end{algorithm}

\begin{remark}
If $C$ is an elliptic curve and $\infty$ is $\mathbb{F}_{q}$-rational,
then for any $\mathcal{O}_{S}$-line $L_{0}$, the special orthogonal
group of $H(L_{0})$, being split, of rank $2$ and
$\mathcal{O}_{S}$-regular, is a one dimensional split $\mathcal{O}_{S}$-torus, i.e., isomorphic to $\underline{\mathbb{G}}_{m}$ (see in
the proof of \cite[Theorem~4.5]{Bit}), hence the proper classification
is via $H^{1}_{\text{\'{e}t}}(\mathcal{O}_{S},\underline{\mathbb{G}}
_{m}) \cong\text{Pic~}(\mathcal{O}_{S}) \cong C(\mathbb{F}_{q})$. Then
the class representatives are obtained by the above algorithm when replacing $C(\mathbb{F}_{q})/2$ by $C(\mathbb{F}_{q})$. This means that invertible fractional ideals, corresponding to non-trivial squares in
$C(\mathbb{F}_{q})$, i.e., $[L] \in2 \text{Pic~}(\mathcal{O}_{S})
\backslash\{0\}$, and only they, induce spaces $H(L_{0} \otimes L
^{*})$ that are \emph{stably isomorphic} to $H(L_{0})$ in the proper classification, namely, become
properly isomorphic after being extended by any non-trivial regular orthogonal
$\mathcal{O}_{S}$-space. In other words, the Witt Cancellation Theorem
fails over $\mathcal{O}_{S}$ in this case for the proper classification.
\end{remark}

\begin{example}\footnote{I take this opportunity to correct Example~4.12 in
\cite{Bit}.} Let $C = \{ Y^{2}Z = X^{3} + XZ^{2}\}$ defined over
$\mathbb{F}_{5}$. Then
\begin{equation*}
C(\mathbb{F}_{5}) = \{ (0:0:1), (1:0:2), (1:0:3), (0:1:0) \}.
\end{equation*}
Taking $S = \{\infty= (0:1:0)\}$ we get the affine elliptic curve
\begin{equation*}
C^{\text{af}}= \{y^{2} = x^{3} + x \} \ \ \text{with:} \ \
\mathcal{O}_{S}= \mathbb{F}_{5}[x,y]/(y^{2}-x^{3}-x).
\end{equation*}
The affine supports of the points in $C(\mathbb{F}_{5}) - \{ \infty
\}$ are: $ \{ (0,0), (1/2,0) = (3,0), (1/3,0) = (2,0) \}$. The
$y$-coordinate of these points vanishes which means that they are of
order $2$ according to the group law and $C(\mathbb{F}_{q}) \cong(
\mathbb{Z}/2)^{2}$. We get:
\begin{equation*}
\mathfrak{m}_{(0:0:1)} = \langle x,y \rangle, \ \mathfrak{m}_{(1:0:2)}
= \langle x-3,y \rangle, \ \mathfrak{m}_{(1:0:3)} = \langle x-2,y
\rangle, \ \mathfrak{m}_{(0:1:0)} = \mathcal{O}_{S}.
\end{equation*}
Now consider the standard ternary quadratic $\mathcal{O}_{S}$-space
$V = H(\mathcal{O}_{S}) \bot\langle1\rangle$, i.e., with
$B_{q}= \left(
\begin{array}{c@{\quad}c@{\quad}c}
0 & 1 & 0
\\
1 & 0 & 0
\\
0 & 0 & 1
\end{array}
\right) $. For any $[L] \in\text{Pic~}(\mathcal{O}_{S})/2=\text{Pic~}(
\mathcal{O}_{S})$, the quadratic space $V_{L} = H(L) \bot\langle1
\rangle$ belongs to $\text{Cl}_{S}(\underline{\text{\textbf{O}}}_{V})$
and there are four non-equivalent classes in
$\text{Cl}_{S}(\underline{\text{\textbf{O}}}_{V})$. For example,
$A_{\langle x,y \rangle}^{-1} = \left(
\begin{array}{c@{\quad}c}
x & -y/x
\\
y & -x
\end{array}
\right) $ induces the form represented by $\left(
\begin{array}{c@{\quad}c@{\quad}c}
2xy & -2x^{2}-1 & 0
\\
-2x^{2}-1 & 2y & 0
\\
0 & 0 & 1
\end{array}
\right) $, being not $\mathcal{O}_{S}$-isomorphic to $V$, as
$\langle x,y \rangle$ is not principal.
\end{example}


\subsection{$|S|>1$}

If $|S|>1$ then an $\mathcal{O}_{S}$-regular quadratic space $V$ will possess multiple ($2^{|S|-1}$) genera, 
and if $\text{rank}(V) = 3,4$ some of them may be anisotropic, i.e., contain anisotropic representatives only.  

\begin{example}
\label{|S|=2}
Let $C$ be the projective line defined over $\mathbb{F}_{3}$, so
$K=\mathbb{F}_{3}(t)$, and let $S=\{t,t^{-1}\}$, so $\mathcal{O}_{S}=
\mathbb{F}_{3}[t,t^{-1}]$, being the ring of regular functions on the
multiplicative group $\underline{\mathbb{G}}_{m}$, thus it is a PID. The
ternary $\mathcal{O}_{S}$-space $V = \langle1,-1,-t \rangle$ with
$q(x,y,z) = x^{2}-y^{2}-tz^{2}$ is isotropic, e.g., $q(1,1,0)=0$. It is
properly isomorphic over $\mathcal{O}_{S}(i)$ (being a scalar extension
of $\mathcal{O}_{S}$ thus an \'{e}tale one), by $\text{diag}(1,i,-i)$
to the anisotropic form $V'=\langle1,1,t \rangle$. Both are
$\mathcal{O}_{S}$-unimodular as $\det(q)=t \in\mathcal{O}_{S}^{\times}$, 
but belong to two distinct genera (there are exactly
$2^{|S|-1}=2$ such, cf. Proposition~\ref{Wittexactness}). 
As $\mathcal{O}_{S}$ is a PID, 
$\text{Pic}~(\mathcal{O}_{S})=1$, and so
according to Theorem~\ref{isotropicclassification} there is only one class in 
$\text{Cl}_{S}(\underline{\text{\textbf{O}}}_{V})$. 
The hyperbolic plane $\langle 1,-1 \rangle$ has trivial Witt invariant,
and it is orthogonal to $\langle -t \rangle$ in $V$, thus $w(q)=0$ (cf. \cite[IV, Prop.~8.1.1, 1), 3)]{Knu}), so $[q]$ corresponds to the
trivial element in $_{2}{\mathrm{Br}}(\mathcal{O}_{S})$. To compute the
Clifford algebra of $(V',q')$, we choose the natural basis
\begin{equation*}
\{e_{1}=(1,0,0), \ e_{2}=(0,1,0), \ e_{3}=(0,0,1)\}.
\end{equation*}
The embedding $i:V' \hookrightarrow\text{\textbf{C}}(q')$ satisfies the
relations $i(v)^{2}=q'(v) \cdot 1 \ \forall v \in V'$ which imply:
\begin{equation*}
i(v) i(u) + i(u) i(v) = B_{q'}(u,v) \ \ \forall u,v \in V'.
\end{equation*}
Since $\{e_{i}\}_{i=1}^{3}$ are orthogonal, this means that:
\begin{equation*}
i(e_{i})i(e_{j}) = - i(e_{j})i(e_{i})
\ \
\forall1 \leq i \neq j \leq3,
\end{equation*}
so we may choose (as $q'$ is anisotropic the obtained quaternion algebra
is not split):
\begin{equation*}
i(e_{1}) = \left(
\begin{array}{c@{\quad}c}
1 & 0
\\
0 & -1
\end{array}
\right) , \ i(e_{2}) = \left(
\begin{array}{c@{\quad}c}
0 & 1
\\
1 & 0
\end{array}
\right) , \ i(e_{3}) = \left(
\begin{array}{c@{\quad}c}
0 & \sqrt{-t}
\\
-\sqrt{-t} & 0
\end{array}
\right) .
\end{equation*}
Then $\{1,i(e_{1})i(e_{2}),i(e_{2})i(e_{3}),i(e_{1})i(e_{3})\}$ is a
basis of $\text{\textbf{C}}_{0}(q')$ (cf. \cite[V.~\S3]{Knu}):
\begin{equation*}
\text{\textbf{C}}_{0}(q') = \left\langle 1, \ \left(
\begin{array}{c@{\quad}c}
0 & 1
\\
-1 & 0
\end{array}
\right) , \ \left(
\begin{array}{c@{\quad}c}
-\sqrt{-t} & 0
\\
0 & \sqrt{-t}
\end{array}
\right) , \ \left(
\begin{array}{c@{\quad}c}
0 & \sqrt{-t}
\\
 \sqrt{-t} & 0
\end{array}
\right) \right\rangle
\end{equation*}
and $w(q') = [\text{\textbf{C}}_{0}(q')]$ is the non-trivial element in
$_{2}{\mathrm{Br}}(\mathcal{O}_{S})$. 
\end{example}

{\bf Acknowledgements:}
I thank L.~Gerstein, B.~Kahn,
B.~Kunyavski\u{\i} and R.~Parimala for valuable discussions concerning
the topics of the present article. I would like to give special thanks
to P. Gille, my postdoc advisor in Camille Jordan Institute in the
University Lyon 1 for his fruitful advice and support. 

%
%
%
%
%
\end{document}